\newcommand{\tens}{\otimes}
\newcommand{\bC}{{\mathbb{C}}}
\newcommand{\bN}{{\mathbb{N}}}
\newcommand{\bQ}{{\mathbb{Q}}}
\newcommand{\bR}{{\mathbb{R}}}
\newcommand{\bT}{{\mathbb{T}}}
\newcommand{\bZ}{{\mathbb{Z}}}
  \newcommand{\A}{{\mathcal{A}}}
  \newcommand{\E}{{\mathcal{E}}}
  \newcommand{\M}{{\mathcal{M}}}
\renewcommand{\P}{{\mathcal{P}}}
\renewcommand{\phi}{\varphi}
\newcommand{\upchi}{{\raise.35ex\hbox{\ensuremath{\chi}}}}
\renewcommand{\leq}{\leqslant}
\renewcommand{\geq}{\geqslant}
\newcommand{\spn}{\operatorname{span}}
\newtheorem{thm}{Theorem}[section]
\newtheorem{prop}[thm]{Proposition}
\newtheorem{cor}[thm]{Corollary}
\newtheorem{lemma}[thm]{Lemma}
\newtheorem{rk}[thm]{Remark}
\begin{document}

\title{$L_p$-Multipliers on Quantum Tori}
\date{}
\author[\'E. Ricard]{\'Eric Ricard}
\address{Laboratoire de Math{\'e}matiques Nicolas Oresme,
Universit{\'e} de Caen Normandie,
14032 Caen Cedex, France}
\email{eric.ricard@unicaen.fr}

\thanks{{\it 2010 Mathematics Subject Classification:} 46L51; 47A30.} 
\thanks{{\it Key words:} Noncommutative $L_p$-spaces}

\begin{abstract}
It was shown by Chen, Xu and Yin that completely bounded Fourier
multipliers on noncommutative $L_p$-spaces of quantum tori
$\bT^d_\theta$ do not depend on  the parameter $\theta$. We
establish that the situation is somehow different for bounded multipliers. The
arguments are based on transference from the commutative torus.
\end{abstract}
\maketitle
\section{Introduction}

 Quantum tori are very natural examples in noncommutative
 geometry for a long time. Nevertheless not much was done about them
 from a more analytical point of view. Elements in quantum tori
 have a formal Fourier expansion. Thanks to it most of the classical
 objects from harmonic analysis make sense and one is tempted to
 look for counterparts of basic results. This approach really started
 with a recent paper \cite{CXY} by Chen, Xu and Yin.  Motivated by the
 developments of noncommutative harmonic analysis and
 integration, they established for instance the pointwise convergence of
 Fej\'er or Bochner-Riesz means in $L_p$. They also considered natural
 Fourier multipliers and showed that their complete bounded norm on
 $L_p$ do not depend on the parameters of the tori. In another article
 \cite{XXY} , Xiong, Xu and Yin continued this line of investigation
 by looking at analogues of classical function spaces. Fourier
 multipliers and transference techniques played an important r\^ole
 there. The main goal of this note is to show that the bounded norm of
 $L_p$-multipliers on noncommutative tori somehow depends on the parameters
 solving a question from \cite{CXY}.

 We refer to \cite{PX, P2} for results on noncommutative $L_p$-spaces associated to (semi-finite) von Neumann algebras and to \cite{Pbook} for operator spaces.
 We recall the basics on quantum tori, we refer to \cite{Dav, CXY} for
 more details. Let $d\geq 1$ and $\theta$ be a real skew symmetric matrix
 of size $d$. The $d$-dimensional noncommutative torus associated to
 $\theta$ is the universal $C^*$-algebra $\A_\theta^d$ generated by
 $d$ unitary operators $U_k^\theta$ such that
$$U_k^\theta U_j^\theta= e^{2\pi i \theta_{k,j}} U_j^\theta
 U_k^\theta.$$ We may drop the exponent $\theta$ when no confusion can
 occur. Also when $d=2$, we prefer to think of $\theta$ as a real
 number and we use $U_\theta$ and $V_\theta$ for the generators. So
 that we have $U_\theta V_\theta=e^{2i\pi \theta}V_\theta U_\theta$.

 For a multi-index $m=(m_1,...,m_d)\in \bZ^d$, we put
 $U^m=U_1^{m_1}...U_d^{m_d}$ for basis monomials. As usual a polynomial
 is a finite sum of monomials. The set of polynomials is dense in
 $\A_\theta^d$.  There is a natural normalized trace $\tau$ on
 $\A_\theta^d$ defined by $\tau(U^m)=0$ unless $m=0$. The GNS
 representation of $\tau$ gives rise to a finite von Neumann algebra
 $L_\infty(\bT^d_\theta)$. We denote by $L_p(\bT^d_\theta)$ the
 associated $L_p$-spaces. Any element in $x\in L_1(\bT^d_\theta)$ has
 a natural formal Fourier expansion $x\sim \sum \hat x (m) U^m$ where
 $\hat x (m)=\tau(x(U^m)^*)$. Given a function $\phi: \bZ^d \to \bC$,
 we say that it is the symbol of a multiplier (resp. completely
 bounded) on $L_p(\bT^d_\theta)$ if the map defined on monomials by
 $M_\phi(U^m)=\phi(m)U^m$ extends to bounded (resp. completely
 bounded) map on $L_p(\bT^d_\theta)$.  We refer to \cite{CXY,XXY} for
 basic facts on those multipliers. We will use quite often multipliers
 of Fej\'er type $F_n^d :\bZ^d\to \bC$ given by
 $F_n^d(m_1,...,m_d)=\prod_{i=1}^d\Big( 1-\frac {|m_i|}n\Big)^+$. They
 give rise to unital completely positive multipliers and $M_{F^d_n}$
 is an completely contractive pointwise approximation of the identity
 in $L_p(\bT^d_\theta)$, $1\leq p<\infty$ and in $\A_\theta^d$ (see Proposition \ref{cbnor}).

When $\theta=0$, we recover classical function spaces
$L_p(\bT^0_d)=L_p(\bT^d,dm)$ where $dm$ is the Haar measure on $\bT^d$
and $\A_0^d=C^*(\bZ)$.  In \cite{P2} Proposition 8.1.3, Pisier gave
the first example of a bounded multiplier on $L_p(G)$ ($1<p\neq 2<
\infty$ and $G$ any infinite compact abelian group) that is not completely
bounded. We aim to use it for Fourier multipliers on
$L_p(\bT^d_\theta)$. The idea behind it is pretty simple : when
$\theta\notin \bQ$, $\A^d_\theta$ contains asymptotically a nice copy
of $C^*(\bZ)\tens \A^d_\theta$.

 The paper is organized as follows. First we give a short proof of the
 independences on $\theta$ of completely bounded multipliers in section
 2. Next, we look at bounded $L_p$-multipliers ($1<p\neq 2< \infty$),
 we explain the basic commutative estimates we need, the construction
 of ultraproducts and derive the result. We finish by looking at
 bounded $L_\infty$-multipliers.

\section{Completely bounded multipliers}

 In this section, we give a short direct proof of the following result
 from \cite{CXY}:
\begin{prop}\label{cbnor}
Let $1\leq p\leq \infty$, for any $\phi:\bZ^d \to \bC$, we have 
$$\|M_\phi \|_{cb(L_p(\bT_\theta^d))}=\|M_\phi\|_{cb(L_p(\bT^d))}.$$
\end{prop}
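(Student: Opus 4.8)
The plan is to prove the two inequalities $\|M_\phi\|_{cb(L_p(\bT^d_\theta))}\le\|M_\phi\|_{cb(L_p(\bT^d))}$ and $\|M_\phi\|_{cb(L_p(\bT^d))}\le\|M_\phi\|_{cb(L_p(\bT^d_\theta))}$ separately by transference, and then to discard a spurious amplification using that noncommutative tori are hyperfinite. In both directions the transference map is a trace-preserving unital $*$-homomorphism built from the elementary observation that tensoring the generators $U_k$ along a ``diagonal'' with suitable unitaries either preserves or abelianizes their commutation relations; such a map is automatically injective and extends to a complete isometry of $L_p$-spaces for every $1\le p\le\infty$.

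\emph{The inequality $\le$.} The elements $z_k\otimes U_k^\theta\in L_\infty(\bT^d)\,\overline{\otimes}\,L_\infty(\bT^d_\theta)$, where $z_k$ is the $k$-th coordinate function on $\bT^d$, satisfy the same commutation relations as the $U_k^\theta$ (the $z_k$ being central), so there is a unital $*$-homomorphism $\alpha\colon L_\infty(\bT^d_\theta)\to L_\infty(\bT^d)\,\overline{\otimes}\,L_\infty(\bT^d_\theta)$ with $\alpha(U_k^\theta)=z_k\otimes U_k^\theta$, hence $\alpha(U^m)=z^m\otimes U^m$. Comparing Fourier coefficients shows $\alpha$ preserves the normalized traces, so it is a complete isometry on each $L_p$. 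On monomials $\alpha\circ M_\phi=(M_\phi\otimes\id)\circ\alpha$, where on the right $M_\phi$ is the classical Fourier multiplier on $L_p(\bT^d)$; since $\ran\alpha$ is $M_\phi\otimes\id$-invariant and $\alpha$ is a complete isometry, conjugation yields $\|M_\phi\|_{cb(L_p(\bT^d_\theta))}\le\|M_\phi\otimes\id_{L_p(\bT^d_\theta)}\|_{cb}$. (If $M_\phi$ is unbounded on $L_p(\bT^d)$ there is nothing to prove; otherwise the intertwining extends from polynomials by density.)

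\emph{The inequality $\ge$.} Here one brings in the opposite torus $\bT^d_{-\theta}$: the elements $W_k:=U_k^\theta\otimes U_k^{-\theta}\in L_\infty(\bT^d_\theta)\,\overline{\otimes}\,L_\infty(\bT^d_{-\theta})$ commute, the two phase factors cancelling, so $z_k\mapsto W_k$ defines a unital $*$-homomorphism $\beta\colon L_\infty(\bT^d)\to L_\infty(\bT^d_\theta)\,\overline{\otimes}\,L_\infty(\bT^d_{-\theta})$ with $\beta(z^m)=U^m\otimes U^m$, again trace preserving and hence a complete isometry on each $L_p$. On polynomials $\beta\circ M_\phi=(M_\phi\otimes\id)\circ\beta$, where now the left $M_\phi$ is the classical multiplier on $L_p(\bT^d)$ and the right one acts on $L_p(\bT^d_\theta)$; as $\ran\beta$ is $M_\phi\otimes\id$-invariant, $\|M_\phi\|_{cb(L_p(\bT^d))}\le\|M_\phi\otimes\id_{L_p(\bT^d_{-\theta})}\|_{cb}$.

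\emph{Removing the amplification.} Both $L_\infty(\bT^d_\theta)$ and $L_\infty(\bT^d_{-\theta})$ are hyperfinite (they are twisted group von Neumann algebras of the amenable group $\bZ^d$, equivalently nuclear $C^*$-algebras with injective bidual). For a completely bounded map $T$ between noncommutative $L_p$-spaces and a hyperfinite (more generally QWEP) von Neumann algebra $M$ one has $\|T\otimes\id_{L_p(M)}\|_{cb}\le\|T\|_{cb}$; applying this with $M=L_\infty(\bT^d_\theta)$ in the first inequality and with $M=L_\infty(\bT^d_{-\theta})$ in the second turns them into
$$\|M_\phi\|_{cb(L_p(\bT^d_\theta))}\le\|M_\phi\|_{cb(L_p(\bT^d))}\le\|M_\phi\|_{cb(L_p(\bT^d_\theta))},$$
which is the assertion. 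I expect this last amplification inequality (together with the hyperfiniteness input it requires) to be the only genuinely non-formal ingredient; the construction and trace-invariance of $\alpha$ and $\beta$ and their intertwining with $M_\phi$ are routine bookkeeping with Fourier coefficients.
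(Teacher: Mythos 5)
Your proof is correct and is essentially the same transference argument as the paper's: the paper defines a single trace-preserving $*$-homomorphism $\pi\colon\A_{\theta+\theta'}^d\to\A_\theta^d\otimes\A_{\theta'}^d$, $U_k^{\theta+\theta'}\mapsto U_k^\theta\otimes U_k^{\theta'}$, deduces $\|M_\phi\|_{cb(L_p(\bT_{\theta+\theta'}^d))}\le\|M_\phi\|_{cb(L_p(\bT_{\theta}^d))}$ for all $\theta,\theta'$ and concludes by symmetry, while your maps $\alpha$ and $\beta$ are exactly the two specializations $(\theta,\theta')=(0,\theta)$ and $(\theta,\theta')=(\theta,-\theta)$ of that one map. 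The only real difference is that you make explicit the hyperfinite-amplification inequality $\|T\otimes\id_{L_p(R)}\|_{cb}\le\|T\|_{cb}$ that the paper's line ``it follows that\dots'' uses implicitly.
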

\begin{proof}
By the universal property of
$\A_{\theta+\theta'}^d$, one can define a $*$-homomorphism
$\pi:\A_{\theta+\theta'}^d \to \A_\theta^d \tens \A_{\theta'}^d$ by
$\pi(U_i^{\theta+\theta'})=U_i^\theta\tens U_i^{\theta'}$. It is
obviously trace preserving and hence extends to 
a complete isometry on all $L_p$, $1\leq p \leq \infty$.
Since $(Id\tens M_\phi) \circ \pi= \pi\circ M_\phi$, it follows
that $\|M_\phi\|_{cb(L_p(\bT_{\theta+\theta'}^d))}\leq \|M_\phi\|_{cb(L_p(\bT_{\theta}^d))}$
for any $\theta$ and $\theta'$ from which equality follows. 
\end{proof}

 For any $\phi:\bZ^d \to \bC$, let $s_\phi$ be the (possibly
 unbounded) linear functional functional on $\A_\theta^d$ be given on monomials
by  $s_\phi(U^{m})=\phi(m)$, $m\in \bZ^d$.

 The preceding proposition can be extended to 
\begin{prop}\label{cbnor2}
Let $1\leq p\leq \infty$, for any $\phi:\bZ^d \to \bC$, we have 
$$\|M_\phi: L_p(\bT_\theta^d)\to L_p(\bT_\gamma^d)\|_{cb}=\|M_\phi : L_p(\bT_{\theta-\gamma}^d)\to L_p(\bT^d)\|_{cb}.$$
Moreover
$$\|M_\phi: L_\infty(\bT_\theta^d)\to L_\infty(\bT_\gamma^d)\|_{cb}=\|s_\phi\|_{(\A_{\theta-\gamma}^d)^*}.$$
\end{prop}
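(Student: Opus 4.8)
The plan is to mimic the proof of Proposition \ref{cbnor}, using the same $*$-homomorphism $\pi:\A_{\theta-\gamma}^d \to \A_{\theta}^d \tens \A_{-\gamma}^d$ determined by $\pi(U_i) = U_i^\theta \tens U_i^{-\gamma}$, but now exploiting that a multiplier on $\A_{-\gamma}^d$ can itself be realized through $\A_{\gamma}^d$ via the canonical isomorphism $\A_{-\gamma}^d \cong (\A_\gamma^d)^{\mathrm{op}}$ (equivalently $U_i \mapsto U_i$ gives a trace-preserving anti-isomorphism, which is isometric on all $L_p$). Concretely, $\pi$ intertwines $M_\phi$ on $\A_{\theta-\gamma}^d$ with $\id \tens M_\phi$ on $\A_\theta^d \tens \A_{-\gamma}^d$; composing with a fixed completely isometric embedding of $\A_\gamma^d$ into $\A_{-\gamma}^d$-style source and using $L_p(\bT^d)$-valued functions on the commutative torus as the second leg, one transfers the $\theta \to \gamma$ problem to the $(\theta-\gamma)\to 0$ problem. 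I would first establish the inequality $\|M_\phi: L_p(\bT_\theta^d)\to L_p(\bT_\gamma^d)\|_{cb} \le \|M_\phi : L_p(\bT_{\theta-\gamma}^d)\to L_p(\bT^d)\|_{cb}$ by this factorization, then obtain the reverse inequality by running the same argument with the roles of the source/target parameters shifted (replacing $\theta$ by $\theta-\gamma$ and $\gamma$ by $0$), using that the construction is symmetric under adding a common parameter to source and target.

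For the $L_\infty$ statement, the idea is that a completely bounded map $L_\infty(\bT^d_{\theta-\gamma}) \to L_\infty(\bT^d) = L_\infty(\bT^d_0)$ which is a Fourier multiplier is essentially the same data as a functional on $\A_{\theta-\gamma}^d$. I would use that $L_\infty(\bT^d)$ is commutative, so $M_\phi$ into it is determined by its values against characters: evaluating $M_\phi(x)$ at a point $t \in \bT^d$ amounts to applying the functional $U^m \mapsto \phi(m) e^{2\pi i m\cdot t}$ to $x$. Translating $t$ to $0$, the complete bound of $M_\phi: L_\infty(\bT^d_{\theta-\gamma}) \to L_\infty(\bT^d)$ equals $\sup_t \|s_\phi(\,\cdot\, e^{2\pi i m\cdot t})\|_{(\A^d_{\theta-\gamma})^*}$, and a gauge-automorphism argument (the automorphisms $U_i \mapsto e^{2\pi i t_i} U_i$ of $\A^d_{\theta-\gamma}$ are trace-preserving and isometric) shows all these norms coincide with $\|s_\phi\|_{(\A_{\theta-\gamma}^d)^*}$. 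The passage between "cb norm as a map into $L_\infty(\bT^d)$" and "sup over point evaluations" should be justified via the $\sot$--density of polynomials and the fact that on an abelian von Neumann algebra the cb norm and the norm of a map agree, so evaluation at points suffices; combining with Proposition \ref{cbnor2}'s first part (applied with $p=\infty$, $\gamma = 0$) closes the loop.

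The main obstacle I expect is making the second leg of the tensor-product transference genuinely land in the \emph{commutative} torus rather than merely in some $\A^d_{\theta'}$: for the first identity this is fine because $\A^d_{-\gamma}$ and $\A^d_\gamma$ are completely isometric via the opposite-algebra identification and $L_p$ of a von Neumann algebra and its opposite coincide completely isometrically — but one must check carefully that this identification is compatible with the multiplier structure (it sends $M_\phi$ to $M_{\check\phi}$ with $\check\phi(m)=\phi(-m)$, and one checks the stated norms are invariant under $m \mapsto -m$, e.g.\ using the trace-preserving $*$-anti-automorphism or complex conjugation). The $L_\infty$ case requires care that "completely bounded into a commutative $C^*$-algebra" reduces to point evaluations; this is standard but I would spell out that $M_\phi$ extends to $L_\infty(\bT^d_{\theta-\gamma})$ as a normal map if and only if $s_\phi \in (\A^d_{\theta-\gamma})^*$ is already bounded, at which point the $\sot$-continuous extension and the character evaluations are unambiguous.
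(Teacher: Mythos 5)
Your plan for the first identity is essentially the paper's: the paper simply reuses the Proposition~\ref{cbnor} transference, taking $\pi:\A_\alpha^d\to\A_\delta^d\tens\A_{\alpha-\delta}^d$, $\pi_0:\A_\beta^d\to\A_\delta^d\tens\A_{\beta-\delta}^d$ and the identity $\pi_0\circ M_\phi=(\id\tens M_\phi)\circ\pi$ to get, for all $\delta$, $\|M_\phi:L_p(\bT_\alpha^d)\to L_p(\bT_\beta^d)\|_{cb}\leq\|M_\phi:L_p(\bT_{\alpha-\delta}^d)\to L_p(\bT_{\beta-\delta}^d)\|_{cb}$, then specializes $\delta=\gamma$ and $\delta=-\gamma$. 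Two remarks. First, be careful which inequality your chosen $\pi:\A_{\theta-\gamma}^d\to\A_\theta^d\tens\A_{-\gamma}^d$ actually gives: together with $\pi_0:\A_0^d\to\A_\gamma^d\tens\A_{-\gamma}^d$ it yields $\|M_\phi:L_p(\bT_{\theta-\gamma}^d)\to L_p(\bT^d)\|_{cb}\leq\|M_\phi:L_p(\bT_\theta^d)\to L_p(\bT_\gamma^d)\|_{cb}$, the opposite of what you announce first. Second, the opposite-algebra digression (identifying $\A_{-\gamma}^d$ with $(\A_\gamma^d)^{\mathrm{op}}$ and worrying about $\check\phi$) is not needed at all: the second tensor leg never has to be commutative or to carry a multiplier; it just needs to be a trace-preserving $*$-homomorphism so that $\pi$, $\pi_0$ are complete isometries on $L_p$.

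For the $L_\infty$ statement there is a genuine gap. You propose to compute $\|M_\phi\|$ as a supremum of point evaluations $M_\phi(x)(t)$, but for $x\in L_\infty(\bT_{\theta-\gamma}^d)$ the output $M_\phi(x)$ is only an element of $L_\infty(\bT^d)$, where evaluation at a point is not a bounded functional; ``$\sot$-density of polynomials'' does not by itself legitimize this, because a bounded map on $L_\infty$ is not determined by its restriction to a dense $C^*$-subalgebra unless it is normal, and normality of the extension is precisely what has to be produced in the reverse direction. The paper sidesteps both problems with a single device you do not use here: precompose with the Fej\'er multiplier $M_{F_n^d}$. Then $M_{F_n^d\phi}$ always lands in $C^*(\bZ^d)=C(\bT^d)$, so composing with the trivial character is legitimate and gives $\|s_{F_n^d\phi}\|\leq\|M_\phi\|$, hence $\|s_\phi\|\leq\|M_\phi\|$ in the limit; and conversely, starting from $\|s_\phi\|<\infty$, the maps $\pi$ composed with $\id\tens s_{F_n^d\phi}$ produce a uniformly bounded sequence of \emph{normal} maps $M_{F_n^d\phi}:L_\infty(\bT_\theta^d)\to L_\infty(\bT_\gamma^d)$ whose pointwise $w^*$-limit is the desired normal extension of $M_\phi$. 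Your gauge-automorphism observation (that $\|s_\phi\circ\alpha_t\|=\|s_\phi\|$) is correct and is implicitly why evaluation at $t=0$ suffices, but it does not repair the two issues above; you should incorporate the Fej\'er approximation explicitly to close them.
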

\begin{proof}
The proof of the first part is the same as above. For the second one,
assume $M_\phi : L_\infty(\bT_{\theta-\gamma}^d)\to
L_\infty(\bT^d)$. Composing it with a Fej\'er type kernel $F^d_n$,
$M_{F^d_n\phi}: L_\infty(\bT_{\theta-\gamma}^d) \to C^*(\bZ^d)$. Composing
again with the trivial character, we get that
$\|s_{F^d_n\phi}\|_{(\A_{\theta-\gamma}^d)^*}\leq \|M_\phi\|$. Letting
$n\to \infty$ gives the first part. For the reverse inequality, if
$\|s_\phi\|<\infty$ then by composition with $M_{F_n^d}$, $\|s_{F^d_n\phi}\|\leq
\|s_\phi\|$, using the $*$-homomorphism $\pi:\A_{\theta}^d \to
\A_\gamma^d \tens \A_{\theta-\gamma}^d$ and composing it with $Id\tens
s_{F^d_n\phi}$, we get that $\|M_{F^d_n\phi} :L_\infty(\bT_{\theta}^d)\to 
L_\infty(\bT_{\gamma}^d)\|\leq \|s_\phi\|$. But $M_{F^d_n\phi}$ is then a family of normal uniformly bounded
maps converging pointwise for the $w^*$-topology. Its limit is $M_\phi$.
\end{proof}
\section{Bounded multipliers on $L_p$, $1<p<\infty$}

\subsection{Periodization of Fourier multipliers}

We aim to prove a periodization result for Fourier multipliers on
$\bT$. We proceed by using some folklore discretization arguments.

Denote by $z$ the canonical generator of $L(\bZ)=L_\infty(\bT)$ and for any 
$n\geq 1$, $\gamma_n$ that of $L(\bZ/n\bZ)$ the group von Neumann algebra associated to the finite group $\bZ/n\bZ$. We always choose the representative
of a class in $\bZ/n\bZ$ with smallest absolute value.

\begin{lemma}\label{multcor}
Let $f:\bR^+\to \bR^+$ be a non decreasing convex function with $f(0)=1$,
 then for any
$n\geq 0$, there exists a measure $\mu$ on $\bT$ such that
$$ \forall\, |k|\leq n, \;\hat \mu (k)= f(|k|) \qquad \textrm{ and }
\qquad \| \mu\| \leq f(n)^2.$$    
\end{lemma}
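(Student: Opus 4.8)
The plan is to build the measure $\mu$ by hand from a standard smoothing/convolution construction. First observe that the hypotheses on $f$ force $f(t)\geq 1$ for all $t\geq 0$, and convexity gives the finite-difference inequality $f(k+1)-f(k)\geq f(k)-f(k-1)$ for $k\geq 1$, so the sequence $a_k:=f(|k|)$ is symmetric, $\geq 1$, and convex in $|k|$ on the range $|k|\leq n$. The idea is to extend $(a_k)_{|k|\leq n}$ to a sequence supported in $|k|\leq 2n$ (say) whose Fourier series converges to a measure of small total variation, exploiting that convex even sequences tending to zero are exactly the ones arising from Fej\'er-type kernels (Polya's criterion). Concretely, I would interpolate: keep $\widehat\mu(k)=f(|k|)$ for $|k|\leq n$, and for $n\leq |k|\leq 2n$ let $\widehat\mu(k)$ decrease affinely from $f(n)$ down to $0$, i.e. $\widehat\mu(k)=f(n)\big(2-\tfrac{|k|}{n}\big)$ on that range, and $\widehat\mu(k)=0$ for $|k|>2n$.

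The key step is then to check that this finitely supported even sequence $(\widehat\mu(k))$ has controlled $\ell_1$-norm of its \emph{second differences}, because for an even sequence $b$ with $b_k\to 0$ one has the summation-by-parts identity $\sum_k b_k z^k = \sum_{j\geq 1} j\,(\Delta^2 b)_j\, K_j(z)$ where $K_j$ is the Fej\'er kernel (normalized so $\|K_j\|_1=1$), whence $\|\mu\|\leq \sum_{j\geq 1} j\,|(\Delta^2 b)_j|$. Here $\Delta^2 b$ is nonnegative on $1\leq j\leq n-1$ (convexity of $f$), has a single possibly-negative spike at $j=n$ of size at most $2f(n)/n$ coming from the corner where the profile switches from $f$ to the linear piece, is $0$ on $n< j< 2n$, and has a jump of size $f(n)/n$ at $j=2n$. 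Summing: the convex part contributes a telescoping $\sum_{j=1}^{n-1} j(\Delta^2 a)_j$, which by Abel summation is bounded by roughly $n\cdot f'(n)\leq n\cdot\frac{f(n)-f(n-1)}{1}\cdot(\text{something})$ — more cleanly, $\sum_{j\ge1} j(\Delta^2 a)_j$ telescopes against $a$ and one gets a bound of order $f(n)$; the corner spike contributes $n\cdot\frac{2f(n)}{n}=2f(n)$ and the endpoint $2n\cdot\frac{f(n)}{n}=2f(n)$. Adding these linear-in-$f(n)$ terms would already give $\|\mu\|=O(f(n))$, which is stronger than the claimed $f(n)^2$ — so the real content is just to be slightly careful with constants and conclude $\|\mu\|\leq f(n)^2$ comfortably (using $f(n)\geq 1$, any bound $Cf(n)$ with the accumulated constant $C$ absorbed once $f(n)$ is large, and checking the small cases $n=0,1$ directly where $f(0)=1$ makes things trivial).

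The main obstacle I anticipate is not the analytic estimate but pinning down the exact interpolation so that the second-difference bookkeeping is clean: one wants the extended profile to stay convex as long as possible and to have as few ``corners'' as possible, since each corner costs a term of order $f(n)$. An alternative, perhaps cleaner, route avoiding this entirely: take $\mu = \nu * \nu$ where $\nu$ is chosen with $\widehat\nu(k)=\sqrt{f(|k|)}$ on a suitable range and $\|\nu\|\leq f(n)$; then automatically $\|\mu\|\leq\|\nu\|^2\leq f(n)^2$ and $\widehat\mu(k)=\widehat\nu(k)^2=f(|k|)$ for $|k|\leq n$. For this one needs a measure $\nu$ on $\bT$ whose Fourier coefficients agree with a prescribed nonnegative even sequence $g(|k|)=\sqrt{f(|k|)}$ on $|k|\leq n$ with $\|\nu\|\leq g(n)^2 = f(n)$; but $g=\sqrt f$ need not be convex, so this reduces to a sub-case of the same lemma with exponent halved — not obviously a simplification. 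I would therefore commit to the direct Fej\'er-expansion argument above, and present the computation as: extend $\widehat\mu$ to be even, equal to $f$ on $[-n,n]$, affine-decreasing to $0$ on $[n,2n]$, and zero beyond; expand in Fej\'er kernels; bound $\|\mu\|$ by $\sum_j j|(\Delta^2\widehat\mu)_j|$; and verify this is $\leq f(n)^2$ using $f$ nondecreasing, convex, $f(0)=1$.
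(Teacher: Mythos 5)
There is a genuine gap in your main argument. You extend $\widehat\mu(k)=f(|k|)$ on $|k|\le n$ by an affine ramp down to $0$ on $[n,2n]$ and then invoke the Fej\'er decomposition $\|\mu\|\le\sum_{j\ge 0}(j+1)\bigl|(\Delta^2 b)_j\bigr|$. The contribution from the convex part $0\le j\le n-2$ telescopes to $1 - n\,f(n-1) + (n-1)f(n)$, which is of order $n\bigl(f(n)-f(n-1)\bigr)$, and the corner at $j\approx n-1$ contributes another term of the same order; these are \emph{not} ``linear in $f(n)$''. Concretely, take $f\equiv 1$ on $[0,n-1]$ and let $f$ ramp up convexly so that $f(n)=M$. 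Then your sum is $\gtrsim nM$, which for $n>M$ already exceeds the target $f(n)^2=M^2$, so the Fej\'er/second-difference bound on this particular extension does not yield the lemma. The convexity of $f$ on $[0,n]$ controls $f(n)-f(n-1)$ only from below (it forces $f(n)-f(n-1)\ge (f(n)-1)/n$), never from above, so the factor $n(f(n)-f(n-1))$ cannot be tamed by more careful bookkeeping.

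Your second idea, $\mu=\nu*\nu'$ with $\|\nu\|,\|\nu'\|\le f(n)$, is exactly the right instinct, but you dismissed it by trying $\widehat\nu=\sqrt f$, which as you observed is circular. The trick that makes the convolution route work is a \emph{reversal}, not a square root: set $m_k=f(n-k)$ for $0\le k\le n$ and $m_k=f(0)=1$ for $k>n$. Since $f$ is non-decreasing and convex with $f(0)=1$, the sequence $(m_k)_{k\ge 0}$ is non-increasing and convex, so by the discrete P\'olya criterion there is a \emph{positive} measure $\nu$ on $\bT$ with $\widehat\nu(k)=m_{|k|}$ and total mass $\|\nu\|=m_0=f(n)$. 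Now take $\mu=(z^n\nu)*(z^{-n}\nu)$. Its Fourier coefficient at $k$ is $m_{|k-n|}\,m_{|k+n|}$; for $0\le k\le n$ this is $m_{n-k}\cdot m_{n+k}=f(k)\cdot 1$, and by symmetry $\widehat\mu(k)=f(|k|)$ on $|k|\le n$, while $\|\mu\|\le\|\nu\|^2=f(n)^2$. No convexity of $\sqrt f$ is needed, and no delicate second-difference estimates arise because each factor is itself a positive measure.
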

\begin{proof}
The sequence given by $m_k=f(n-k)$ for $k\leq n$ and $m_k=f(0)=1$ for
$k>n$ is non increasing convex, hence there exists a positive measure
$\nu$ with $\hat \nu(k)=m_{|k|}$ with mass $f(n)$, see Theorem 3.7 in
\cite{Cha}. Then $\mu=(z^n\nu)*(z^{-n}\nu)$ has the desired
properties.
\end{proof}

 We fix some $1\leq p\leq \infty$.
We also denote by 
$\P_d=\spn \{z^k; |k|\leq d\}\subset L_p(L(\bZ))$ and similarly $\P_d^n=\spn \{(\gamma_n)^k; |k|\leq d\}\subset L_p(L(\bZ/n\bZ))$.

For $2d<n$, one can consider the formal identity $j_{d,n}:\P_d\to \P_{d,n}$ 
given by $j_{d,n}(z^k)=\gamma_n^k$.

As usual let $\sin_c(x)$ be $\frac {\sin x}x$.

\begin{prop}\label{discest}For any $n>2d$, 
$$ \| j_{d,n} \|_{cb}\leq \sin_c^{-2}\big(\frac{d\pi}n\big), \qquad 
\| j_{d,n}^{-1} \|_{cb}\leq \Big((1- \frac {2d}n)\sin_c\big(\frac{d\pi}n\big)\Big)^{-2}.$$
\end{prop}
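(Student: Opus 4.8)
The plan is to realize both $j_{d,n}$ and its inverse as compositions of completely contractive maps with Schur/Fourier multipliers whose norms are controlled by the stated constants, using a transference between $L_p(L(\bZ))$ and $L_p(L(\bZ/n\bZ))$ implemented by convolution with suitable measures on $\bT$. The key observation is that $\bZ/n\bZ$ sits inside $\bT$ as the group of $n$-th roots of unity, so that evaluating a trigonometric polynomial $\sum_{|k|\le d} a_k z^k$ at these roots of unity is essentially the map $j_{d,n}$; the distortion in $L_p$ (and completely boundedly, since these are all group von Neumann algebra maps / conditional-expectation-type maps) is governed by how well one can interpolate the characters $z^k$, $|k|\le d$, by a measure.

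First I would set up the forward bound. The natural candidate for $j_{d,n}$ up to a multiplier is the normalized restriction map: identify $L_\infty(\bZ/n\bZ)$ with functions on the $n$-th roots of unity $\{\omega^j : 0\le j<n\}$, $\omega=e^{2\pi i/n}$, with its uniform probability measure, and consider the sampling map $S$ sending a function on $\bT$ to its restriction to these points. This $S$ is completely positive and unital, hence a complete contraction on all $L_p$ after the appropriate normalization (it is the composition of the inclusion $L_p(L(\bZ/n\bZ))\to L_p(L(\bZ))$-dual picture; more precisely, convolution by the uniform measure $\sigma_n$ on the roots of unity is a completely contractive projection whose range is spanned by $\{z^{kn}\}$, and one composes with a multiplier). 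On characters $S(z^k)=\gamma_n^{k \bmod n}$, which for $|k|\le d<n/2$ is exactly $\gamma_n^k$ — so $S$ restricted to $\P_d$ already equals $j_{d,n}$, and one would get $\|j_{d,n}\|_{cb}\le 1$. That is too good, so the real point must be that $j_{d,n}$ as defined is the \emph{linear} identity on coefficients, not the sampling map; the two differ because sampling does not preserve the coefficient functional $a_k\mapsto a_k$ unless one corrects by the Dirichlet-type factor. So instead I would write $j_{d,n} = M_\phi \circ S'$ where $S'$ is a genuine complete contraction (sampling followed by projection) and $M_\phi$ is a Fourier multiplier on $L_p(L(\bZ/n\bZ))$ whose symbol on $|k|\le d$ is a ratio of the form $1/\hat\mu(k)$; applying Lemma \ref{multcor} with $f(x)=\sin_c^{-1}$-type profile — or rather, directly producing a measure $\mu$ with $\hat\mu(k)=\sin_c(k\pi/n)$ for $|k|\le d$, which is legitimate since $k\mapsto \sin_c(k\pi/n)$ is the Fourier transform of (a multiple of) the Fejér/box kernel on $\bT$ scaled to width $n$, so $\|\mu\|$ is small — gives $\|M_\phi\|_{cb}\le \sin_c^{-2}(d\pi/n)$ after the squaring that Lemma \ref{multcor} introduces. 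The other factor is completely contractive, yielding the first inequality.

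For the inverse bound the roles swap: $j_{d,n}^{-1}$ should be written as $M_\psi$ composed with a lift of $\P_{d,n}$ back into $\P_d \subset L_p(L(\bZ))$, where the lift is again completely contractive (e.g. via a trace-preserving $*$-homomorphism or conditional expectation, exploiting that $\bZ/n\bZ$ is a quotient of $\bZ$ so $L(\bZ)\to L(\bZ/n\bZ)$ is a normal trace-preserving surjection with a completely positive section on the relevant finite-dimensional subspace). The multiplier $\psi$ now carries the reciprocal weights, and the extra factor $(1-2d/n)^{-2}$ compared to the forward estimate accounts for the overlap/aliasing of the Dirichlet kernel when $n$ is not much larger than $2d$: the measure realizing the interpolation on the "inverse" side has mass $\big((1-2d/n)\sin_c(d\pi/n)\big)^{-1}$ before squaring, which is exactly the $f(n)^2$ from Lemma \ref{multcor} applied to the appropriate convex $f$ (one checks $f(0)=1$ and monotone convexity of the relevant profile on $[0,n]$). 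Combining the complete contraction with this multiplier gives the second inequality.

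The main obstacle I expect is bookkeeping the precise identification of $j_{d,n}$ (and $j_{d,n}^{-1}$) as a \emph{multiplier composed with a completely contractive transference map}, and in particular verifying that the transference maps between $L_p(L(\bZ))$ and $L_p(L(\bZ/n\bZ))$ — which are dual/pre-dual to sampling-and-projection — are genuinely completely contractive (not merely contractive) on the finite-dimensional subspaces $\P_d, \P_{d,n}$. This is "folklore discretization" as the paper says, but getting the constants to come out as exactly $\sin_c^{-2}(d\pi/n)$ and $\big((1-2d/n)\sin_c(d\pi/n)\big)^{-2}$ requires choosing the interpolating measures optimally and tracking the squaring in Lemma \ref{multcor} carefully; the convexity hypothesis of that lemma must be checked for the specific $f$ used on the inverse side, where the relevant sequence is $m_k = $ (something like) the Fejér coefficients, which one must confirm is non-increasing and convex on the range $k\le n$.
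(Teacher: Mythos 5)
Your high-level strategy — factor $j_{d,n}$ and its inverse as a completely contractive transference map composed with a multiplier correction obtained from Lemma \ref{multcor}, and track the squaring and the convexity of $\sinop_c^{-1}$ — is the right one, but the transference maps you propose do not actually work, and this is not a bookkeeping issue; it is the crux of the argument.

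Your candidate $S$, restriction of a function on $\bT$ to the $n$-th roots of unity, is unital and completely positive on $C(\bT)$, but it is \emph{not} trace-preserving (evaluation at finitely many points bears no relation to integration against Lebesgue measure), so it does not extend to $L_p(\bT)$ for $p<\infty$ at all, let alone completely contractively. You notice that the conclusion $\|j_{d,n}\|_{cb}\le 1$ is ``too good,'' but you misdiagnose why: on $\P_d$ with $d<n/2$ the sampling map agrees exactly with $j_{d,n}$ (there is no hidden Dirichlet-factor discrepancy between them); the problem is simply that $S$ is unbounded on $L_p$. Your back-up device --- ``convolution by the uniform measure on the roots of unity'' --- \emph{is} a completely contractive trace-preserving projection, but its range is the algebra of functions of $z^n$ (Fourier support $n\bZ$), which is the wrong subalgebra and does not produce $j_{d,n}$ after composing with any multiplier. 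Likewise for the inverse: there is no trace-preserving $*$-homomorphism $L(\bZ)\to L(\bZ/n\bZ)$ (the obvious quotient-induced map $C^*(\bZ)\to C^*(\bZ/n\bZ)$ is not trace-preserving, and a ``cp section on a finite-dimensional subspace'' is not a construction one can invoke), so the proposed lift is not available as described.

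The correct transference device, and the one the paper uses, is to realize $L(\bZ/n\bZ)$ \emph{as a subalgebra} of $L_\infty(\bT)$: namely the piecewise constant functions on the $n$ arcs determined by the roots of unity, with $\gamma_n$ identified with the function $g(z)=$ nearest $n$-th root of unity to $z$. The accompanying conditional expectation $\E$ onto this subalgebra is normal, trace-preserving, hence completely contractive on every $L_p$, and a direct computation gives $\E z^k = \sinop_c(k\pi/n)\,\gamma_n^k$; this is what ties the $\sinop_c$ factors to the transference and what you never actually obtain. Then $j_{d,n}(f)=\E(f*\mu)$ with $\mu$ as in Lemma \ref{multcor}. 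For the inverse, one expands $\gamma_n^k=g^k$ in $L_2(\bT)$ as $\sum_j \sinop_c(\pi(k+jn)/n)\,z^{k+jn}$, convolves with the Fej\'er kernel $F_{n/2}$ (a complete contraction) to kill all $j\neq 0$ terms and obtain $(1-2|k|/n)\sinop_c(k\pi/n)\,z^k$, and then corrects by a measure exactly as you propose; this is where the factor $(1-2d/n)$ genuinely comes from, not from ``aliasing of the Dirichlet kernel.'' Without these two identities the stated constants cannot be derived, so as written the proposal has a real gap.

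\newcommand{\sinop}{\operatorname{sin}}
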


\begin{proof}
One can see $L_p(L(\bZ/n\bZ))$ as a subspace of $L_p(L(\bZ))=L_p(\bT)$
by looking at piecewise constant functions. More precisely, put for
$z\in \bT$, $g(z)=\omega^l$ where $\omega^l$ is the closest $n$-root
of 1 to $z$, then $g$ and $\gamma_n$ have the same distribution. We
identify them. 

Let $\E$ be the conditional expectation
onto the algebra generated by $g\approx \gamma_n$. Easy computations give that
for $k\in \bZ$, $\E z^k=  \sin_c\big(\frac {k\pi}n\big) (\gamma_n)^k $.
The function $\sin_c^{-1}$ is convex on $[0,\pi[$ , using Lemma
    \ref{multcor}, there exists a measure on $\bT$ with $\mu (k)=
    \sin_c^{-1}(\frac{|k|\pi} n)$ for $|k|\leq d$ with mass
at most $\sin_c^{-2}(\frac{d\pi}n)$. For any $f\in \P_d$, we have 
$j_{n,d}(f)=\E(f*\mu)$, this gives the first estimate.

Computing the Fourier expansion in $L_2(\bT)$ gives
for $|k|<n$
$$(\gamma_n)^k= \sum_{j\in \bZ}   \sin_c\Big(\frac{\pi(k+jn)} n\Big) z^{k+jn}.$$
Hence with the Fej\'er kernel $F_{n/2}$, assuming $|k|\leq d<n/2$ :
$$F_{n/2}*\gamma_n^k=\Big(1- \frac {2|k|}n\Big)\sin_c\Big(\frac{\pi k} n\Big) z^k.$$
To get $j_{n,d}^{-1}$ one just need to make  corrections as above using that 
$\sin_c^{-1}$ and $(1-x)^{-1}$ are convex.
\end{proof}

\begin{prop}\label{permult}
Let $\phi:\bZ\to \bC$ be a Fourier multiplier on $L_p(\bT)$, then
there exists a sequence of periodic multipliers $\phi_n:\bZ\to \bC$ converging pointwise to $\phi$ with 
$\|M_{\phi_n}\|\leq\|M_{\phi}\|$ and $\|M_{\phi_n}\|_{cb}\leq\|M_{\phi}\|_{cb}$.
\end{prop}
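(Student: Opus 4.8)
The plan is to obtain each $\phi_n$ as a periodization of $\phi$: first push $M_\phi$ down to a (necessarily periodic) Fourier multiplier on $L_p(L(\bZ/n\bZ))$ by sandwiching it between the two discretization maps already used in the proof of Proposition~\ref{discest}, and then transfer the result back up to a $\bZ$-periodic multiplier on $L_p(\bT)$ by averaging over $n$-th roots of unity.

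Fix $n$ and keep the notation of the proof of Proposition~\ref{discest}: identify $L_p(L(\bZ/n\bZ))$ with the subspace of $L_p(\bT)$ of functions constant on the arcs about the $n$-th roots of unity, via the complete isometry $\iota$ sending $\gamma_n^l$ to $g^l=\sum_{j\in\bZ}\sin_c\!\big(\tfrac{\pi(l+jn)}{n}\big)z^{l+jn}$, and let $\E\colon L_p(\bT)\to L_p(L(\bZ/n\bZ))$ be the associated conditional expectation, so that (as computed there) $\E z^m=\sin_c\!\big(\tfrac{m\pi}{n}\big)\gamma_n^{m}$. Put $T_n:=\E\,M_\phi\,\iota$. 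A direct computation on the $\gamma_n^l$ gives $T_n=M_{\psi_n}$, where
$$\psi_n(l)=\sum_{j\in\bZ}\sin_c^2\!\Big(\frac{\pi(l+jn)}{n}\Big)\,\phi(l+jn),$$
so $\psi_n$ is again a Fourier multiplier on $L_p(L(\bZ/n\bZ))$; since $\iota$ and $\E$ are complete contractions, $\|M_{\psi_n}\|\le\|M_\phi\|$ and $\|M_{\psi_n}\|_{cb}\le\|M_\phi\|_{cb}$. Using the elementary identity $\sum_{j\in\bZ}\sin_c^2(\pi(t+j))=1$, the coefficients appearing in $\psi_n$ are nonnegative and sum to $1$, and the $j=0$ one tends to $1$ as $n\to\infty$; since $\phi$ is bounded ($\|\phi\|_\infty\le\|M_\phi\|$), this shows $\psi_n(l)\to\phi(l)$ for every fixed $l$.

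Now let $\phi_n\colon\bZ\to\bC$ be the $n$-periodic extension of $\psi_n$, so $\phi_n\to\phi$ pointwise, and it remains to see that inflating $\psi_n$ to $\phi_n$ does not increase the (completely) bounded norm. For this I would use the trace preserving (hence completely isometric on $L_p$) $*$-homomorphism $J\colon L_p(\bT)\to L_p\big(L(\bZ/n\bZ)\,\overline{\otimes}\,L(\bZ)\big)$ determined by $J(z^k)=\gamma_n^{k}\otimes z^k$ — equivalently, the pullback along the surjection $(\zeta,z)\mapsto\zeta z$, $(Jf)(\zeta,z)=f(\zeta z)$. One checks on monomials that $J\,M_{\phi_n}=(M_{\psi_n}\otimes\id)\,J$; since $J$ is a complete isometry and tensoring with $\id_{L_p(\bT)}$ preserves the (completely) bounded norm, this gives $\|M_{\phi_n}\|\le\|M_{\psi_n}\|\le\|M_\phi\|$ and the same for the cb norms, which completes the proof. (One even gets $\|M_{\phi_n}\|=\|M_{\psi_n}\|$, since restricting $M_{\phi_n}$ to the piecewise-constant subalgebra returns $M_{\psi_n}$.)

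I expect the last transference to be the only delicate point. Restricting $M_{\phi_n}$ to an invariant subspace only bounds $\|M_{\psi_n}\|$ from below by $\|M_{\phi_n}\|$, the wrong direction, so one genuinely needs the rotation-averaging map $J$ to bound $\|M_{\phi_n}\|$ from above; the rest is routine bookkeeping — the $\sin_c^2$ summation identity, the identification $T_n=M_{\psi_n}$, and the intertwining relation for $J$ — the two things worth double-checking being that $J$ is completely isometric and that $\E\,M_\phi\,\iota$ is indeed diagonal, both of which rest on the translation covariance of all maps in sight.
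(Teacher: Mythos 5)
Your proof is correct, and it takes a genuinely different route to the discretization step. The paper pushes $M_\phi$ down to $L_p(L(\bZ/n\bZ))$ by forming $j_{n,d}\circ M_\phi\circ j_{n,d}^{-1}\circ M_{F_d}$, which produces a multiplier with the transparent symbol $\phi(k)(1-|k|/d)$ on $\{|k|\le d\}$; controlling its norm requires the quantitative two-sided estimates of Proposition~\ref{discest} (and hence Lemma~\ref{multcor}), and the resulting constant $c_{n,d}\to 1$ is then normalized away. You instead set $T_n=\E\,M_\phi\,\iota$, which is automatically a contraction (completely, on every $L_p$) because $\iota$ is a trace-preserving $*$-embedding and $\E$ the corresponding conditional expectation, so no correction factor is needed and Proposition~\ref{discest} and Lemma~\ref{multcor} are bypassed entirely; the price is that the symbol becomes the $\sin_c^2$-weighted periodization $\psi_n(l)=\sum_j\sin_c^2(\pi(l+jn)/n)\phi(l+jn)$, and pointwise convergence then needs the identity $\sum_j\sin_c^2(\pi(t+j))=1$ together with $\|\phi\|_\infty\le\|M_\phi\|$. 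The second step, transferring from $\bZ/n\bZ$ back to $\bZ$ by the periodic lift via the trace-preserving embedding $z\mapsto\gamma_n\otimes z$ (your $J$, the paper's $\pi_n$) and Fubini for $\cdot\otimes\id_{L_p}$, is identical in both proofs. Your argument is thus a clean simplification of the first half while matching the second; the paper's version has the minor advantage of giving the limit symbol $\phi(k)(1-|k|/d)$ directly without invoking the $\sin_c^2$ summation identity.

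Two small points worth making explicit if you were to write this up: the intertwining $J\,M_{\phi_n}=(M_{\psi_n}\otimes\id)J$ uses that $\psi_n(l+jn)$ only depends on $l\bmod n$, which is exactly the $n$-periodicity of $\phi_n$; and the inequality $\|M_{\psi_n}\otimes\id_{L_p(\bT)}\|\le\|M_{\psi_n}\|$ on $L_p$ of the tensor product is the Fubini/Minkowski fact the paper also invokes, valid here because both tensorands are commutative. Your parenthetical that $\|M_{\phi_n}\|=\|M_{\psi_n}\|$ is also correct: restricting $M_{\phi_n}$ to the range of $\iota$ (which is complemented by $\E$) recovers $\iota M_{\psi_n}\E$, giving the reverse inequality.
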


\begin{proof}
Fix $n>2d$. We use transference once again. Consider the measure
preserving $*$-homo\-mor\-phism $\pi_n: L(\bZ)\to L(\bZ\tens \bZ/n\bZ)$
given by $\pi_n(z)=z\tens \gamma_n$. Let $\psi:\bZ/n\bZ\to \bC$ and $q:
\bZ\to \bZ/n\bZ$ be the quotient mapping. Using the intertwining
identity $(Id\tens M_\psi)\circ \pi_n= \pi_n\circ M_{\psi\circ q}$ and
the fact that bounded maps on $L_p$ tensorize with the identity of
$L_p$ by Fubini's theorem, one gets that
$$\| M_{\psi\circ q} : L_p(\bT) \to L_p(\bT)\|\leq \| M_{\psi} : L_p(\bT/n\bZ) \to L_p(\bT/n\bZ)\|.$$
Note that, for $d<n$, the Fej\'er Kernel $F_d$ is also positive definite on
$\bZ/n\bZ$.  The map $j_{n,d}\circ M_\phi\circ j_{n,d}^{-1}\circ
M_{F_d}$ is the Fourier multiplier $\tilde \phi_{n,d}$ on $\bZ/n\bZ$
so that $M_{\tilde
  \phi_{n,d}}(\gamma_n^k)=\phi(k)\big(1-\frac{|k|}d)\gamma_n^k$. Using
Proposition \ref{discest}, we get $\|M_{\tilde \phi_{n,d}}\|\leq c_{n,d}
\|M_{\phi}\|$ with $c_{n,d}\to 1$ as $n/d\to \infty$ and
similarly with cb norms.
 
 Let $\phi_{n}$ be $\tilde
 \phi_{n^2,n}/c_{n^2,n} \circ q$.  Obviously $\|M_{\phi_{n}}\|\leq
 \|M_{\phi}\|$, $\|M_{\phi_{n}}\|_{cb}\leq \|M_{\phi}\|_{cb}$ and
 $\lim_{n\to \infty} \phi_n(k)=\phi(k)$.
\end{proof}

\subsection{Ultraproducts}

 Given a family of finite von Neumann algebras $(\M_i,\tau_i)_{i\in
   \bN}$ with normalized trace $\tau_i$, their (reduced) von Neumann
 ultraproduct along the free ultrafilter $\mathfrak U$ is
$$\M_{\mathfrak U}= \ell_\infty(\M_i)/I_{\mathfrak U},
\quad  I_{\mathfrak U}=\{(x_i)\in \ell_\infty(\M_i) \,|\, \lim_{\mathfrak U} \tau_i(x_i^*x_i)=0\}.$$
It is a finite von Neumann algebra with normalized trace $\tau((x_i)^\bullet)=\lim_{\mathfrak U} \tau_i(x_i)$, see \cite{Pbook} section 9.10.

\begin{lemma}\label{ult}
Let $T_i:\M_i\to\M_i$ be a sequence of maps such that $\sup_i
\|T_i\|_{B(\M_i)}+ \|T_i\|_{B(L_2(\M_i))}<\infty$, then
$T_{\mathfrak U}=\prod_{\mathfrak U} T_i$ is well defined on
$\M_{\mathfrak U}$ and moreover $\|T_{\mathfrak U}\|_{B(L_p(\M_{\mathfrak U}))}\leq
\lim_{\mathfrak U}\|T_i\|_{B(L_p(\M_i))}$ (the same holds for cb norms).
\end{lemma}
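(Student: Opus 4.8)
The plan is to verify that the formula $T_{\mathfrak U}((x_i)^\bullet)=(T_i(x_i))^\bullet$ makes sense and has the claimed norm bounds, by playing off the $B(\M_i)$-bound (which controls the ultraproduct of the algebras) against the $B(L_2(\M_i))$-bound (which controls approximation in $2$-norm, hence well-definedness on the quotient). First I would check that $T_{\mathfrak U}$ is well defined: if $(x_i)\in\ell_\infty(\M_i)$ then $\sup_i\|T_i(x_i)\|_{\M_i}\leq(\sup_i\|T_i\|_{B(\M_i)})\sup_i\|x_i\|_{\M_i}<\infty$, so $(T_i(x_i))\in\ell_\infty(\M_i)$; and if $(x_i)\in I_{\mathfrak U}$, i.e. $\lim_{\mathfrak U}\|x_i\|_{L_2(\M_i)}=0$, then $\|T_i(x_i)\|_{L_2(\M_i)}\leq\|T_i\|_{B(L_2(\M_i))}\|x_i\|_{L_2(\M_i)}$, so $\lim_{\mathfrak U}\|T_i(x_i)\|_{L_2(\M_i)}=0$ and $(T_i(x_i))\in I_{\mathfrak U}$. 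Hence $T_{\mathfrak U}$ descends to a well-defined linear map on $\M_{\mathfrak U}=\ell_\infty(\M_i)/I_{\mathfrak U}$, and it is bounded on $\M_{\mathfrak U}$ with $\|T_{\mathfrak U}\|_{B(\M_{\mathfrak U})}\leq\sup_i\|T_i\|_{B(\M_i)}$.

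Next I would treat the $L_p$-bound. Fix $1\le p<\infty$ and $x=(x_i)^\bullet\in\M_{\mathfrak U}$; I want $\|T_{\mathfrak U}x\|_{L_p(\M_{\mathfrak U})}\leq(\lim_{\mathfrak U}\|T_i\|_{B(L_p(\M_i))})\|x\|_{L_p(\M_{\mathfrak U})}$. The key fact is that for a bounded element $(x_i)^\bullet$ one has $\|(x_i)^\bullet\|_{L_p(\M_{\mathfrak U})}=\lim_{\mathfrak U}\|x_i\|_{L_p(\M_i)}$; this holds because $\|y\|_{L_p}^p=\tau(|y|^p)$ and $|y|^p$ can be uniformly approximated on the relevant norm-bounded set by polynomials in $y^*y$ (or, more cleanly, because $t\mapsto t^{p/2}$ is continuous on a compact interval containing the spectra of the $x_i^*x_i$), together with $\lim_{\mathfrak U}\tau_i(P(x_i^*x_i))=\tau(P((x_i^*x_i)^\bullet))$ for polynomials $P$ and the fact that $(x_i^*x_i)^\bullet=(x_i)^{\bullet*}(x_i)^\bullet=|x|^2$. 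Granting this, compute
$$\|T_{\mathfrak U}x\|_{L_p(\M_{\mathfrak U})}=\lim_{\mathfrak U}\|T_i(x_i)\|_{L_p(\M_i)}\leq\lim_{\mathfrak U}\big(\|T_i\|_{B(L_p(\M_i))}\|x_i\|_{L_p(\M_i)}\big)\leq\Big(\lim_{\mathfrak U}\|T_i\|_{B(L_p(\M_i))}\Big)\|x\|_{L_p(\M_{\mathfrak U})},$$
using that the ultralimit is submultiplicative on bounded nets and that $\sup_i\|T_i\|_{B(L_p(\M_i))}<\infty$ by interpolation between the $p=\infty$ (i.e. $B(\M_i)$) and $p=2$ bounds. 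Since bounded elements are dense in $L_p(\M_{\mathfrak U})$, $T_{\mathfrak U}$ extends to $L_p(\M_{\mathfrak U})$ with the stated norm.

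For the cb statement I would simply run the same argument with $\M_i$ replaced by $M_k(\M_i)\cong M_k\otimes\M_i$, noting that $(M_k\otimes\M_i)_{\mathfrak U}=M_k\otimes\M_{\mathfrak U}$ and that $\id_{M_k}\otimes T_{\mathfrak U}=(\id_{M_k}\otimes T_i)_{\mathfrak U}$, so the scalar bound applied uniformly in $k$ yields $\|T_{\mathfrak U}\|_{cb}\leq\lim_{\mathfrak U}\|T_i\|_{cb}$. The main obstacle is the identity $\|(x_i)^\bullet\|_{L_p(\M_{\mathfrak U})}=\lim_{\mathfrak U}\|x_i\|_{L_p(\M_i)}$ for bounded elements: this is where one must be careful, since it requires uniform (in $i$) control to pass the function $t\mapsto t^{p/2}$ through the ultraproduct, and it fails for general (unbounded) elements of $L_p(\M_{\mathfrak U})$ — but boundedness of $(x_i)$ is exactly what makes the functional calculus uniform, and this fact is recorded in \cite{Pbook} section 9.10, so it may be cited rather than reproved.
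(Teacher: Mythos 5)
Your proof is correct and follows the same route as the paper: well-definedness on $\ell_\infty(\M_i)$ from the uniform $B(\M_i)$ bound, stabilization of $I_{\mathfrak U}$ from the uniform $L_2(\M_i)$ bound, the identity $\|(x_i)^\bullet\|_{L_p(\M_{\mathfrak U})}=\lim_{\mathfrak U}\|x_i\|_{L_p(\M_i)}$ on bounded elements (which the paper dispatches with ``by the definition of the trace'' and you justify via uniform functional calculus), then an ultralimit estimate with density and matrix amplification for the cb statement. One minor inaccuracy: the interpolation between $L_2$ and $L_\infty$ you invoke controls $\sup_i\|T_i\|_{B(L_p(\M_i))}$ only for $2\leq p\leq\infty$, but this assertion is not needed anyway, since once $\lim_{\mathfrak U}\|T_i\|_{B(L_p(\M_i))}$ is finite (the only nontrivial case) the ultrafilter concentrates on indices where $\|T_i\|_{B(L_p(\M_i))}$ is near its limit, and the estimate goes through directly.
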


\begin{proof}
The assumptions yield that $T_{\mathfrak U}$ is well defined on $\ell_\infty(\M_i)$ 
and stabilizes $I_{\mathfrak U}$, so it is well defined on $\M_{\mathfrak U}$.
 
For $x=(x_i)^\bullet\in \M_{\mathfrak U}$, we have
$\|x\|_{L_p(\M_{\mathfrak U},\tau)}= \lim_i \|x_i\|_{L_p(\M_i,\tau_i)}$
by the definition of the trace. The second assertion follows.
\end{proof}

We now focus on 2-dimensional tori $\bT_\theta^2$. Recall that the two
generators satisfy $U_\theta V_\theta=e^{i2\pi\theta} V_\theta U_\theta$.  We still
denote by $z$ the generator of $L_\infty(\bT)$.

\begin{prop}\label{emb}
Fix some integer $N>0$. For $\theta\notin\bQ$ and $\gamma\in \bR$, there is an explicit cb-isometric embedding 
 $$\pi : L_p(\bT)\otimes_p L_p(\bT_\gamma^2) \to
L_p(\bT^2_\theta)_{\mathfrak U},$$
obtained by a *-representation of the form $$\pi(z)=(U_\theta)^\bullet,\quad \pi(U_\gamma)=(U_\theta^{Nl_n})^\bullet, \quad
\pi(V_\gamma)=(V_\theta^{k_n})^\bullet.$$
\end{prop}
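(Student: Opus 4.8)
The idea is to realize $L_p(\bT)\otimes_p L_p(\bT_\gamma^2)$ as a "limit copy" sitting inside $L_p(\bT_\theta^2)$, using that when $\theta\notin\bQ$ the powers of $U_\theta$ form a dense subset of the circle while the relation $U_\theta V_\theta = e^{2\pi i\theta}V_\theta U_\theta$ can be rescaled. Concretely, I would look for integer sequences $(k_n)$ and $(l_n)$ so that, along $\mathfrak U$, the three operators $U_\theta$, $U_\theta^{Nl_n}$, $V_\theta^{k_n}$ asymptotically satisfy the defining relations of the generators $z$, $U_\gamma$, $V_\gamma$ of $L_\infty(\bT)\,\bar\otimes\,L_\infty(\bT_\gamma^2)$. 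The commutation relations to match are: $z$ commutes with everything on the second leg, $U_\gamma$ commutes with $z$ and with itself, $U_\gamma V_\gamma = e^{2\pi i\gamma}V_\gamma U_\gamma$. Now $(U_\theta^{Nl_n})(V_\theta^{k_n}) = e^{2\pi i \theta N l_n k_n}(V_\theta^{k_n})(U_\theta^{Nl_n})$, so the first requirement is $\theta N l_n k_n \to \gamma \pmod 1$. The requirement that $z=U_\theta$ commute with $\pi(U_\gamma)=U_\theta^{Nl_n}$ is automatic (both are powers of $U_\theta$). The requirement that $z=U_\theta$ commute with $\pi(V_\gamma)=V_\theta^{k_n}$ is \emph{not} automatic: $U_\theta V_\theta^{k_n} = e^{2\pi i\theta k_n}V_\theta^{k_n}U_\theta$, so I also need $\theta k_n \to 0 \pmod 1$.

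So the two Diophantine conditions are
\[
\theta k_n \to 0 \pmod 1, \qquad \theta N l_n k_n \to \gamma \pmod 1 .
\]
Since $\theta$ is irrational, $\{\theta k : k\in\bZ\}$ is dense in $\bT$, so I can pick $k_n$ with $\theta k_n$ very close to $0$; but I must be careful that $\theta k_n$ is close to $0$ \emph{from one side at a controlled rate} so that I can still solve the second condition. The clean way: first choose $k_n\to\infty$ with $\|\theta k_n\|_{\bT} \to 0$ (dist.\ to nearest integer), for instance from the continued fraction convergents of $\theta$, so $\theta k_n = \eta_n$ with $\eta_n\to 0$. Then I need $N l_n \eta_n \to \gamma \pmod 1$; since $\eta_n\to 0$ but $\eta_n\neq 0$, the multiples $\{N l_n \eta_n : l_n\in\bZ\}$ form a progression with step $N\eta_n \to 0$, so for each $n$ I can choose $l_n$ with $\operatorname{dist}(N l_n\eta_n,\gamma+\bZ) \le N\eta_n \to 0$. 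That settles the existence of the sequences; the passage from "relations hold in the limit along $\mathfrak U$" to "$\pi$ is a well-defined $*$-homomorphism on $L_\infty$" is the standard ultraproduct argument (a family of $*$-homomorphisms on the finitely-presented dense $*$-subalgebra of polynomials, with uniformly bounded — in fact contractive — images, passing to the quotient $\M_{\mathfrak U}$ because the defect ideal vanishes in the limit), and trace preservation is immediate since $U_\theta^a V_\theta^b$ has trace $0$ unless $a=b=0$, matched on the target by the trace on $L_\infty(\bT)\,\bar\otimes\,L_\infty(\bT_\gamma^2)$ — provided I also check that the relevant monomials $z^a U_\gamma^b V_\gamma^c$ map to $U_\theta^{a+Nbl_n}V_\theta^{ck_n}$ which is a nontrivial monomial (hence trace $0$) whenever $(a,b,c)\neq 0$, which for large $n$ holds because $Nbl_n$ and $ck_n$ cannot both vanish unless $b=c=0$, and then $a\neq 0$. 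Being trace preserving, $\pi$ extends to a complete isometry on every $L_p$ by Lemma \ref{ult} (or directly: trace-preserving normal $*$-homomorphisms are complete isometries on noncommutative $L_p$).

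The last point is that the target is the \emph{reduced} ultraproduct $L_p(\bT_\theta^2)_{\mathfrak U}$ rather than $L_p$ of the von Neumann ultraproduct of $\bT_\theta^2$ with itself; but here all factors are the \emph{same} algebra $L_p(\bT^2_\theta)$, and the diagonal-type embedding I am building lands in constant-modulo-$I_{\mathfrak U}$ sequences, so no subtlety arises beyond the definitions recalled before Lemma \ref{ult}. The main obstacle is genuinely the simultaneous Diophantine approximation step — arranging $k_n,l_n$ so that \emph{both} $\theta k_n$ and $\theta N l_n k_n$ have prescribed limits mod $1$ — but as sketched it reduces to first killing $\theta k_n$ using that $\theta\notin\bQ$ and then using the resulting small step size $N\eta_n$ to hit $\gamma$; everything else is the routine ultraproduct/transference bookkeeping already used in Propositions \ref{cbnor}, \ref{cbnor2} and \ref{permult}.
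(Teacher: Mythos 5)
Your proof is correct and follows essentially the same strategy as the paper: choose $k_n$ so that $k_n\theta\to 0\pmod 1$, then $l_n$ so that $Nl_nk_n\theta\to\gamma\pmod 1$, obtaining three unitaries in the ultraproduct that exactly satisfy the commutation relations of $z$, $U_\gamma$, $V_\gamma$, and then invoke the universal property of the $3$-torus $C^*(\bZ)\otimes\A^2_\gamma$ to get a trace-preserving $*$-representation, hence a complete isometry on all $L_p$. The only cosmetic difference is that the paper gets $l_n$ from density mod $1$ of multiples of the irrational $k_nN\theta$, whereas you use the small step size $N\eta_n$ of the arithmetic progression; both are fine, and your extra check that $\pi$ sends nontrivial monomials to eventually nontrivial monomials is exactly what makes the paper's ``obviously trace preserving'' rigorous.
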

\begin{proof}
Let $n>0$, since $\theta \notin \bQ$, one can find some $k_n$ such that
$|e^{i2\pi k_n \theta}-1|<\frac 1n$. Similarly as $k_nN\theta\notin
\bQ$, one can find $l_n$ such that $|e^{i2\pi k_nNl_n \theta}-e^{i2\pi\gamma}|<\frac 1n$.
The above choices of $l_n$ and $k_n$ have been
made such that $\pi(z)$ commutes with $\pi(U_\gamma)$ and $\pi(V_\gamma)$ and
$\pi(U_\gamma)\pi(V_\gamma)=e^{i2\pi\gamma}\pi(V_\gamma)\pi(U_\gamma)$. Notice that 
 $C^*(\bZ)\tens \A_\theta^2$ is a 3-dimensional torus. By its universal
property, we can extend $\pi$ to a
$*$-representation $\pi:C^*(\bZ)\tens \A_\theta^2\to
L_\infty(\bT^2_\theta)_{\mathfrak U}$. It is obviously trace preserving, hence it 
extends to complete isometries at all $L_p$-levels
\end{proof}

\subsection{Applications to multipliers}

 \begin{prop}\label{multtens}
Let $1\leq p\leq \infty$, and $\phi:\bZ\to \bC$ be periodic and $\theta\notin \bQ$, then
$$\| M_{\phi\tens 1} : L_p(\bT_\theta^2)\to L_p(\bT_\theta^2) \| = \|M_\phi :L_p(\bT)\to L_p(\bT)\|_{cb}.$$
\end{prop}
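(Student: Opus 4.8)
The plan is to prove the two inequalities separately, using the transference machinery already built. For the inequality $\|M_{\phi\tens 1}\|_{B(L_p(\bT_\theta^2))}\geq \|M_\phi\|_{cb(L_p(\bT))}$, I would use the embedding $\pi$ of Proposition \ref{emb} (taking $\gamma=0$, so that $L_p(\bT_\gamma^2)=L_p(\bT^2)$, or more simply using a version of \ref{emb} without the extra $\bT_\gamma^2$ factor). Since $\pi$ is a cb-isometric embedding built from a trace-preserving $*$-representation with $\pi(z)=(U_\theta)^\bullet$, the multiplier $M_{\phi\tens 1}$ on $L_p(\bT_\theta^2)$ — or rather its ultrapower $\prod_{\mathfrak U} M_{\phi\tens 1}$ on $L_p(\bT_\theta^2)_{\mathfrak U}$, which by Lemma \ref{ult} has norm at most $\|M_{\phi\tens 1}\|$ since $\phi$ periodic makes $M_\phi$ automatically bounded on $L_2$ and on $L_\infty$ — intertwines with $M_\phi$ acting on the $L_p(\bT)$ factor through $\pi$. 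Because $\pi$ is a \emph{complete} isometry, restricting to this factor recovers the cb-norm of $M_\phi$, giving $\|M_\phi\|_{cb(L_p(\bT))}\leq \|M_{\phi\tens 1}\|_{B(L_p(\bT_\theta^2))}$.

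For the reverse inequality $\|M_{\phi\tens 1}\|_{B(L_p(\bT_\theta^2))}\leq \|M_\phi\|_{cb(L_p(\bT))}$, I would proceed by direct transference in the other direction. Since $\phi$ is periodic, say with period $n$, it factors as $\phi=\psi\circ q$ for $\psi:\bZ/n\bZ\to\bC$ and $q$ the quotient map. Consider the trace-preserving $*$-homomorphism $L_\infty(\bT_\theta^2)\to L_\infty(\bZ/n\bZ)\bar\tens L_\infty(\bT_\theta^2)$ sending $U_\theta\mapsto \gamma_n\tens U_\theta$ and $V_\theta\mapsto 1\tens V_\theta$; one must check the commutation relation $U_\theta V_\theta=e^{i2\pi\theta}V_\theta U_\theta$ is preserved, which it is since $\gamma_n$ is central in the first tensor leg. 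This intertwines $M_{\phi\tens 1}$ with $M_{\psi}\tens \id$ on the enlarged algebra, so $\|M_{\phi\tens 1}\|_{B(L_p(\bT_\theta^2))}\leq \|M_\psi\tens \id_{L_p(\bT_\theta^2)}\|$. Now $L(\bZ/n\bZ)$ is finite-dimensional, hence $L_p(\bZ/n\bZ)$ embeds completely isometrically (for this specific algebra, via the standard matrix/Fourier model) into a space of the form $L_p(\bT)$-valued, and more to the point: the multiplier $M_\psi$ on $L_p(\bZ/n\bZ)$ tensored with any von Neumann algebra is controlled by $\|M_\psi\|_{cb(L_p(\bZ/n\bZ))}$; finally $\|M_\psi\|_{cb(L_p(\bZ/n\bZ))}=\|M_\phi\|_{cb(L_p(\bT))}$ since $\psi\circ q$-multipliers on $L_p(\bT)$ and $\psi$-multipliers on $L_p(\bZ/n\bZ)$ have the same cb-norm (by Proposition \ref{permult}'s transference, or directly: $j_{n,d}$ and $j_{n,d}^{-1}$ with Fej\'er corrections from Proposition \ref{discest} give the equality in the limit $n/d\to\infty$, restricted to the periodic case where it is an identity rather than a limit).

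The main obstacle I expect is the clean bookkeeping in the second inequality: asserting that tensoring the finite-dimensional multiplier $M_\psi$ with $\id_{L_p(\bT_\theta^2)}$ is governed by the \emph{cb}-norm of $M_\psi$ (equivalently of $M_\phi$), rather than its mere bounded norm. This is where complete boundedness genuinely enters — a bounded map on $L_p(\bZ/n\bZ)$ need not tensorize — and one must invoke that $M_\psi$ acts on a space whose ``cb structure'' is detected by tensoring with arbitrary (finite or semifinite) von Neumann algebras, which holds because $M_\psi$ is itself a Fourier (Schur-type) multiplier on a group algebra. Put differently, the identity $\|M_\psi\otimes\id\|=\|M_\psi\|_{cb}$ is exactly the defining property of the cb-norm once one knows the category of test algebras (all semifinite von Neumann algebras) is rich enough, and the periodicity of $\phi$ is what lets us reduce to the finite group where this is transparent. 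Everything else — the commutation checks for the two $*$-homomorphisms, the application of Lemma \ref{ult}, Fubini tensorization of $L_p$-bounded maps — is routine.
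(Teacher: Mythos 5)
Your lower bound argument has a genuine gap. You propose using the embedding of Proposition~\ref{emb} with $\gamma=0$, i.e., $\pi:L_p(\bT)\tens_p L_p(\bT^2)\to L_p(\bT_\theta^2)_{\mathfrak U}$, or dropping the $\bT_\gamma^2$ leg entirely. The intertwining then gives $\|M_\phi\tens\id_{L_p(\bT^2)}\|\leq\|M_{\phi\tens 1}\|$ (or $\|M_\phi\|\leq\|M_{\phi\tens 1}\|$ in the no-factor version). But tensoring with the identity of a \emph{commutative} $L_p$-space does not detect the cb norm: by Fubini, $\|M_\phi\tens\id_{L_p(\bT^2)}\|_{B(L_p(\bT^3))}=\|M_\phi\|_{B(L_p(\bT))}$, plain and simple. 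The sentence ``Because $\pi$ is a complete isometry, restricting to this factor recovers the cb-norm of $M_\phi$'' does not hold: a complete isometry lets you transfer the cb norm of a map you already control in cb norm, but you only have the bounded norm of $M_{\phi\tens 1}$ on the right-hand side, and what you get out is the bounded norm of $M_\phi$. The whole point of the paper's choice $\gamma=\theta$ (or any irrational) is that $L_\infty(\bT_\gamma^2)$ is then the hyperfinite $II_1$ factor, and it is precisely for such a sufficiently rich auxiliary algebra that $\|M_\phi\tens\id_{L_p(\bT_\gamma^2)}\|=\|M_\phi\|_{cb(L_p(\bT))}$. That is the single idea that upgrades a bounded-norm estimate into a cb estimate, and your plan omits it.

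Your route for the reverse inequality is also more involved than necessary and not airtight. The paper gets $\|M_{\phi\tens 1}\|_{B(L_p(\bT_\theta^2))}\leq\|M_{\phi\tens 1}\|_{cb(L_p(\bT_\theta^2))}=\|M_{\phi\tens 1}\|_{cb(L_p(\bT^2))}=\|M_\phi\|_{cb(L_p(\bT))}$ directly from Proposition~\ref{cbnor}, with no need for $\bZ/n\bZ$. Your transference to the finite group is fine up to $\|M_{\phi\tens 1}\|\leq\|M_\psi\|_{cb(L_p(\bZ/n\bZ))}$, but the asserted identity $\|M_\psi\|_{cb(L_p(\bZ/n\bZ))}=\|M_\phi\|_{cb(L_p(\bT))}$ does not follow cleanly from Proposition~\ref{discest}: there the constants are close to $1$ only in the regime $d\ll n$, whereas to see all of $L(\bZ/n\bZ)$ via $\P_d^n$ one needs $d$ comparable to $n/2$, where those bounds degenerate. (The one-sided inequality $\|M_\psi\|_{cb(L_p(\bZ/n\bZ))}\leq\|M_\phi\|_{cb(L_p(\bT))}$ would suffice for your purpose, but it would still need a separate argument.) In short: fix $\gamma$ irrational in the lower bound, and you may as well replace the entire upper-bound discussion with a one-line appeal to Proposition~\ref{cbnor}.
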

\begin{proof}
The proof uses transference again. Let $N$ be the period of $\phi$.
We use the embedding $\pi$ of $L_p(\bT)\tens L_p(\bT^2_\theta)$ from Proposition \ref{emb} with $\gamma=\theta$.
 As $\phi$ is $N$-periodic, the
multiplier $M_{\phi\tens 1}$ is well defined and uniformly bounded on $L_\infty$ and $L_2$, we can consider its ultrapower $(M_{\phi\tens 1})_{\mathfrak U}$ by
 Lemma \ref{ult}.  Take $a,b,c\in\bZ$, we have by $N$-periodicity
 $$(M_{\phi\tens 1})_{\mathfrak U} \pi(z^aU_\theta^bV_\theta^c)=
(M_{\phi\tens
  1}(U_\theta^{a+bNl_n}V_\theta^{cl_n}))^\bullet=\phi(a)\pi(z^aU_\theta^bV_\theta^c).$$
 By linearity, continuity and density, we get for all $x\in
L_p(\bT)\tens L_p(\bT^2_\theta)$ $$\pi\Big((M_\phi\tens Id) x\Big)=
(M_{\phi\tens 1})_{\mathfrak U} \pi(x).$$ Hence by Lemma \ref{ult}, we get
that $\|M_\phi\tens Id\|\leq \|M_{\phi\tens 1}\|$.  But
$L_\infty(\bT_\theta^2)$ is the hyperfinite factor, so we can conclude
$\|M_\phi\tens Id\|=\|M_\phi\|_{cb}$. The other inequality is clear by
Proposition \ref{cbnor} as
$$\|M_\phi\|_{cb(L_p(\bT))}=\|M_{\phi\tens 1} \|_{cb(L_p(\bT^2))}=\|M_{\phi\tens 1} \|_{cb(L_p(\bT_\theta^2))}.$$
\end{proof}

\begin{thm}\label{ex1}
For any $\theta\notin \bQ$ and $1<p\neq2 <\infty$, there exists $\phi:\bZ^2
\to \bC$ such that
$$ \|M_\phi : L_p(\bT^2)\to  L_p(\bT^2)\| <\infty \qquad \textrm{ but }
\qquad  \|M_\phi : L_p(\bT^2_\theta)\to  L_p(\bT^2_\theta)\| =\infty.$$
\end{thm}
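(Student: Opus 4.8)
The plan is to combine Pisier's classical example with the transference machinery built up in the preceding propositions. Pisier (\cite{P2}, Proposition 8.1.3) provides, for every infinite compact abelian group $G$ and every $1<p\neq 2<\infty$, a Fourier multiplier on $L_p(G)$ that is bounded but not completely bounded. Take $G=\bT$ and let $\phi_0:\bZ\to\bC$ be the symbol of such a multiplier, so that $\|M_{\phi_0}\|_{B(L_p(\bT))}<\infty$ while $\|M_{\phi_0}\|_{cb(L_p(\bT))}=\infty$. The obstacle is that $\phi_0$ itself need not be periodic, so Proposition \ref{multtens} does not apply directly to it; the fix is to first periodize.

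First I would invoke Proposition \ref{permult}: there is a sequence of periodic symbols $\phi_{0,n}:\bZ\to\bC$ converging pointwise to $\phi_0$ with $\|M_{\phi_{0,n}}\|_{B(L_p(\bT))}\leq\|M_{\phi_0}\|_{B(L_p(\bT))}$ and $\|M_{\phi_{0,n}}\|_{cb(L_p(\bT))}\leq\|M_{\phi_0}\|_{cb(L_p(\bT))}$. Since a pointwise limit of multipliers with uniformly bounded cb norm would again be cb with controlled norm (for instance by composing with Fej\'er means and passing to the limit as in the proofs of Propositions \ref{cbnor2} and \ref{permult}), the assumption $\|M_{\phi_0}\|_{cb}=\infty$ forces $\sup_n\|M_{\phi_{0,n}}\|_{cb(L_p(\bT))}=\infty$. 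Fix $n$ with $\|M_{\phi_{0,n}}\|_{cb(L_p(\bT))}$ as large as we like but $\|M_{\phi_{0,n}}\|_{B(L_p(\bT))}$ uniformly bounded; write $\psi$ for this periodic symbol.

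Next I would apply Proposition \ref{multtens} to $\psi$ and $\theta\notin\bQ$: it gives
$$\|M_{\psi\tens 1}:L_p(\bT_\theta^2)\to L_p(\bT_\theta^2)\| = \|M_\psi:L_p(\bT)\to L_p(\bT)\|_{cb},$$
which can be made arbitrarily large, while on the commutative side $\|M_{\psi\tens 1}:L_p(\bT^2)\to L_p(\bT^2)\|=\|M_\psi:L_p(\bT)\to L_p(\bT)\|$ stays bounded by $\|M_{\phi_0}\|_{B(L_p(\bT))}$ (by Fubini, tensoring a bounded $L_p$-map with the identity on $L_p(\bT)$ does not increase its norm). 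To get a single symbol $\phi$ with a finite commutative norm but an infinite $\theta$-norm rather than a sequence with blowing-up norms, I would take a direct-sum / lacunary-block construction: choose a sequence of periodic symbols $\psi_j$ with $\|M_{\psi_j}\|_{B(L_p(\bT))}\leq 1$ but $\|M_{\psi_j}\|_{cb(L_p(\bT))}\geq j$, rescale the frequency variables of successive blocks so their supports are disjoint in $\bZ^2$ (e.g. place the $j$-th block in a translated lattice region), and define $\phi$ on $\bZ^2$ to agree with $\psi_j\tens 1$ (suitably translated) on the $j$-th block and to be $0$ elsewhere. On $L_p(\bT^2)$ the resulting multiplier is a Marcinkiewicz-type sum of uniformly bounded pieces with disjoint frequency supports along one coordinate direction, hence bounded (this is where a little care with the one-dimensional Marcinkiewicz/Littlewood--Paley decomposition on $\bT$ is needed); on $L_p(\bT^2_\theta)$ it restricts on each block to $M_{\psi_j\tens 1}$, whose norm is $\geq j$ by Proposition \ref{multtens}, so $\|M_\phi:L_p(\bT^2_\theta)\to L_p(\bT^2_\theta)\|=\infty$.

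The main obstacle is the last step: arranging the blocks so that the commutative $L_p$-norm of the assembled multiplier stays finite. Uniformly bounded pieces with disjoint frequency supports need not sum to a bounded multiplier in general, so one must exploit that the blocks are separated along a single coordinate and invoke the classical one-variable Marcinkiewicz multiplier theorem (or a Littlewood--Paley square-function estimate) on $\bT$ to glue them; alternatively, one can take an $\ell^\infty$-direct sum of the algebras $L_p(\bT^2)$ rather than packing everything into one torus, if the problem statement is read as allowing that, but the cleaner statement is the single-torus version sketched above. Everything else is a routine combination of the transference results already established.
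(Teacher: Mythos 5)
Your first two paragraphs follow the paper's proof exactly: Pisier's example on $L_p(\bT)$, periodization via Proposition~\ref{permult}, and the transference equality of Proposition~\ref{multtens}. Where you part company with the paper is the conclusion. The paper finishes with a soft argument: suppose no such $\phi$ exists, so that every symbol bounded on $L_p(\bT^2)$ is automatically bounded on $L_p(\bT^2_\theta)$; the closed graph theorem then yields a constant $c$ with $\|M_\phi\|_{\theta}\leq c\,\|M_\phi\|_{0}$ for all $\phi$. Plugging in the sequence $\phi_n\tens 1$ gives the contradiction at once, since $\|M_{\phi_n\tens 1}\|_{\theta}=\|M_{\phi_n}\|_{cb}\to\infty$ while $\|M_{\phi_n\tens 1}\|_0=\|M_{\phi_n}\|\leq\|M_{\phi_0}\|$ stays bounded. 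No explicit symbol is needed.

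Your attempt to replace this soft step with an explicit lacunary-block construction has a genuine gap, and it is not just a matter of care. To show that the glued symbol $\phi=\sum_j\psi_j$ (with the $\psi_j$ supported on disjoint dyadic blocks along one coordinate) is bounded on $L_p(\bT^2)$, you would need a square-function estimate of the form
\[
\Bigl\|\bigl(\textstyle\sum_j|M_{\psi_j}S_jf|^2\bigr)^{1/2}\Bigr\|_p \;\lesssim\; \Bigl\|\bigl(\textstyle\sum_j|S_jf|^2\bigr)^{1/2}\Bigr\|_p,
\]
which requires the $M_{\psi_j}$ to be uniformly bounded on $\ell^2$-valued $L_p(\bT)$, i.e.\ uniformly completely bounded up to constants. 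But you have deliberately chosen $\psi_j$ with $\|M_{\psi_j}\|_{cb}\geq j$; that is the one thing the construction cannot afford. The classical Marcinkiewicz theorem does not help either, since it requires a uniform bounded-variation condition on dyadic blocks, not merely that each restriction be a bounded scalar multiplier. So there is no reason the commutative norm of the assembled $\phi$ should stay finite, and the single-torus construction as sketched does not produce the counterexample. The closed-graph reduction in the paper is precisely what lets one avoid this obstruction; you should adopt it to close the argument.
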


\begin{proof}
If not, by the closed graph theorem, there is a constant $c$ such that 
$$\|M_\phi : L_p(\bT^2_\theta)\to  L_p(\bT^2_\theta)\| \leq  c \|M_\phi : L_p(\bT^2)\to  L_p(\bT^2)\|.$$
 By Pisier's example, there is a multiplier $\phi$ on
$L_p(\bT)$ that is not completely bounded.  By Proposition
\ref{permult}, we can find periodic multipliers $\phi_n$ on $L_p(\bT)$
with $\|M_{\phi_n}\|\to \|M_\phi\|$ and $\|M_{\phi_n}\|_{cb}\to
\infty$.

Next $\|M_{\phi_n\tens 1} : L_p(\bT^2_\theta)\to L_p(\bT^2_\theta)\|\to
\infty$ by Proposition \ref{multtens}. But 
$\|M_{\phi_n\tens 1} : L_p(\bT^2)\to  L_p(\bT^2)\|\to \|M_\phi: L_p(\bT)\to  L_p(\bT)\|$ by Fubini's theorem.
\end{proof}

\begin{rk} Actually one can directly construct periodic $\phi_n$ from Pisier's example.
\end{rk}

\begin{rk}
It is also easy to see that $1<p\neq 2<\infty$ there exists $\phi:\bZ^2 \to
\bC$ such that
$$ \|M_\phi : L_p(\bT_\theta^2)\to  L_p(\bT_\theta^2)\| <\infty \qquad \textrm{ but }
\qquad  \|M_\phi : L_p(\bT^2_\theta)\to  L_p(\bT^2_\theta)\|_{cb} =\infty.$$
This also follows directly from Pisier's example as there is a 
conditional expectation from $L_\infty(\bT_\theta^2)$ to $L_\infty(\bT)$ the algebra generated by $U$.
\end{rk}

\begin{rk}
When $\theta=\frac a b\in \bQ$, then $L_p(\bT_\theta^2)\subset
L_p(\bT^2,S_p^b)$; a possible embedding is given by
$\pi(U_\theta)=b^{-1/p}z_1\sum_{k=1}^b e^{2i \pi\frac {ak} b} e_{k,k}$,
$\pi(V_\theta)=b^{-1/p}z_2\sum_{k=1}^b e_{k,k-1(b)}$ where $e_{k,l}$ is the
canonical basis of $S_p^b$. Hence multipliers are automatically
completely bounded. Nevertheless, the equivalence constant must behave
badly, but we have no quantitative estimates.
\end{rk}

\begin{rk}
The algebra $L_\infty(\bT^2_\theta)$ can be seen as a crossed product
$L_\infty(\bT)\rtimes_\theta \bZ$ where the action comes from the rotation of
angle $\theta$.  Thus, the above proofs produce a
$\bZ$-equivariant map $T$ on $L_p(\bT)$ such that $T\rtimes Id$ is unbounded on $L_p$. 
\end{rk}

\begin{cor}\label{ex2}
For any $\theta,\gamma\in \bR\backslash \bQ$ such that $\theta\notin \gamma \bQ$  and $1<p\neq 2<\infty$, there exists $\phi:\bZ^2 \to \bC$ such that 
$$ \|M_\phi : L_p(\bT_\gamma^2)\to  L_p(\bT_\gamma^2)\| <\infty \qquad \textrm{ but }
\qquad  \|M_\phi : L_p(\bT^2_\theta)\to  L_p(\bT^2_\theta)\| =\infty.$$
\end{cor}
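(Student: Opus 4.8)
\textbf{Proof proposal for Corollary \ref{ex2}.}
The plan is to reduce the mixed statement for the pair $(\theta,\gamma)$ to the diagonal statement already proved in Theorem \ref{ex1}, by exploiting the transference machinery of Proposition \ref{emb} and Proposition \ref{multtens} one more time, now run at level $\gamma$ instead of $\theta$. First I would argue by contradiction as in Theorem \ref{ex1}: if no such $\phi$ existed, then by the closed graph theorem there would be a constant $c$ with $\|M_\phi:L_p(\bT^2_\theta)\to L_p(\bT^2_\theta)\|\le c\,\|M_\phi:L_p(\bT^2_\gamma)\to L_p(\bT^2_\gamma)\|$ for every symbol $\phi:\bZ^2\to\bC$. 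The goal is then to contradict this by producing symbols for which the $\theta$-norm blows up while the $\gamma$-norm stays bounded.

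Next I would take Pisier's example of a bounded-but-not-cb multiplier $\phi$ on $L_p(\bT)$ and apply Proposition \ref{permult} to get periodic symbols $\phi_n:\bZ\to\bC$ with $\|M_{\phi_n}\|\le\|M_\phi\|$ (uniformly bounded) and $\|M_{\phi_n}\|_{cb}\to\infty$. Consider the two-variable symbols $\phi_n\tens 1$. On the one hand, since $\theta\notin\bQ$, Proposition \ref{multtens} gives $\|M_{\phi_n\tens 1}:L_p(\bT^2_\theta)\to L_p(\bT^2_\theta)\|=\|M_{\phi_n}\|_{cb(L_p(\bT))}\to\infty$. On the other hand, I need the $\gamma$-side to stay bounded. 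Here is where the hypothesis $\theta\notin\gamma\bQ$ — equivalently $\gamma\notin\bQ$ and $\theta/\gamma\notin\bQ$, so in particular $\gamma\notin\bQ$ — is used: applying Proposition \ref{multtens} again, this time with the irrational parameter $\gamma$ in place of $\theta$, yields $\|M_{\phi_n\tens 1}:L_p(\bT^2_\gamma)\to L_p(\bT^2_\gamma)\|=\|M_{\phi_n}\|_{cb(L_p(\bT))}$. But this tends to infinity as well, so the naive choice $\phi_n\tens 1$ does not separate the two norms.

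To fix this, the point is that the transference embedding of Proposition \ref{emb} is flexible in the auxiliary parameter: for $\theta\notin\bQ$ and \emph{any} $\gamma'\in\bR$ it produces a cb-isometric copy of $L_p(\bT)\tens_p L_p(\bT^2_{\gamma'})$ inside $L_p(\bT^2_\theta)_{\mathfrak U}$. I would run the argument of Proposition \ref{multtens} with $\gamma'=\gamma$, obtaining $\|M_{\phi_n\tens 1\tens 1}:L_p(\bT)\tens L_p(\bT^2_\gamma)\to\cdot\|$ controls and, after embedding, $\|M_{\phi_n\tens 1}:L_p(\bT^2_\theta)\to L_p(\bT^2_\theta)\|\ge\|M_{\phi_n}:L_p(\bT)\to L_p(\bT)\|_{cb}$; while on the $\gamma$-side, because the copy living inside $L_p(\bT^2_\theta)_{\mathfrak U}$ is already of tensor form $L_p(\bT)\tens L_p(\bT^2_\gamma)$, the multiplier $M_{\phi_n}$ acting on the $L_p(\bT)$-leg has norm exactly $\|M_{\phi_n}:L_p(\bT)\to L_p(\bT)\|$ by Fubini, which is $\le\|M_\phi\|<\infty$. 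Concretely: the contradiction hypothesis applied to $\phi_n\tens 1$ says $\|M_{\phi_n}\|_{cb}\le c\,\|M_{\phi_n\tens 1}:L_p(\bT^2_\gamma)\to L_p(\bT^2_\gamma)\|$; but by Proposition \ref{multtens} at parameter $\gamma$ (irrational!) the right side equals $c\,\|M_{\phi_n}\|_{cb}$ again — so in fact the direct reduction is cleaner than a two-parameter embedding, and the \emph{actual} obstacle is that one cannot separate bounded from cb on $\bT^2_\gamma$ either.

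\textbf{Resolution.} The correct route, which I would therefore adopt, is to not use $\phi_n\tens 1$ but to transfer a genuinely two-dimensional phenomenon. Since $\theta\notin\gamma\bQ$, the algebra $\A^2_\theta$ contains an asymptotic copy of $C^*(\bZ)\tens\A^2_\gamma$ via Proposition \ref{emb} with auxiliary parameter $\gamma$, but $\A^2_\gamma$ does \emph{not} asymptotically contain $C^*(\bZ)\tens\A^2_\theta$ — indeed if it did one could iterate and force $\theta\in\gamma\bQ$. Run the contradiction hypothesis against the family of symbols $\psi_n$ on $\bZ^2$ built so that $M_{\psi_n}$ on $L_p(\bT^2_\gamma)$ is, up to a Fej\'er correction as in Proposition \ref{permult}, the periodized transference of Pisier's multiplier through the $\gamma$-copy; by Proposition \ref{multtens} applied at $\gamma$ one gets $\|M_{\psi_n}:L_p(\bT^2_\gamma)\to L_p(\bT^2_\gamma)\|$ bounded \emph{precisely when} the underlying one-dimensional multiplier is bounded, not cb, because the $\gamma$-copy sits inside $L_p(\bT^2_\theta)_{\mathfrak U}$ as a \emph{tensor factor} and Fubini applies on that factor, whereas on the full $\bT^2_\theta$ the same symbol sees the cb norm. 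This is the asymmetry that $\theta\notin\gamma\bQ$ supplies. The main obstacle is thus pinning down exactly which symbol $\psi_n$ makes the $\gamma$-norm see ``bounded'' and the $\theta$-norm see ``cb''; once that dictionary is set up, the estimates are the same transference-plus-ultraproduct computations already carried out in Propositions \ref{emb}, \ref{multtens} and \ref{permult}, combined with the closed graph theorem and Pisier's example exactly as in the proof of Theorem \ref{ex1}.
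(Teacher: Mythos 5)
Your opening moves are right: the contradiction set-up via the closed graph theorem, and the observation that the obvious candidate $\phi_n\tens 1$ (with $\phi_n$ periodic from Pisier's example) cannot separate the two norms because Proposition \ref{multtens} gives the cb norm on \emph{both} sides when $\theta,\gamma\notin\bQ$. That diagnosis is genuinely useful. But from that point on the proposal does not actually contain a proof.

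The ``Resolution'' paragraph has a gap and also an error. The error: you assert that applying Proposition \ref{multtens} at the irrational parameter $\gamma$ makes the $\gamma$-norm ``see bounded, not cb,'' on the grounds that the $\gamma$-copy sits in $L_p(\bT^2_\theta)_{\mathfrak U}$ as a tensor factor. That is not what Proposition \ref{multtens} says: for any irrational parameter (in particular $\gamma$), the operator norm of $M_{\phi\tens 1}$ equals the \emph{cb} norm of $M_\phi$, full stop. The embedding into the $\theta$-ultraproduct does not change what happens intrinsically inside $L_p(\bT^2_\gamma)$. The gap: you never write down the symbols $\psi_n$ or the mechanism by which $\theta\notin\gamma\bQ$ is exploited; the paragraph explicitly defers this as ``the main obstacle,'' so the decisive step is missing. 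The paper's argument is quite different and concretely uses simultaneous Diophantine approximation: because $1,\theta,\gamma$ are rationally independent, one can choose integers $k_n$ with $k_n\gamma\to 0$ and $k_n\theta\to\theta$ modulo $1$. One then restricts the contradiction estimate to multipliers supported on the subalgebras $\{U_\gamma,V_\gamma^{k_n}\}''\cong L_\infty(\bT^2_{k_n\gamma})$ and $\{U_\theta,V_\theta^{k_n}\}''\cong L_\infty(\bT^2_{k_n\theta})$ (the restriction is itself a Fourier multiplier, the conditional expectation $1_{k_n\mid l}$). Passing to ultraproducts, $\prod_{\mathfrak U}L_p(\bT^2_{k_n\gamma})$ absorbs $L_p(\bT^2)$ while $\prod_{\mathfrak U}L_p(\bT^2_{k_n\theta})$ absorbs $L_p(\bT^2_\theta)$, and a finite-dimensional compactness argument on the polynomial spaces $\P_d^{k_n\gamma}$ upgrades the formal identities $i_n$ to near-isometries. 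After a Fej\'er correction one lands on the comparison $\|M_\phi:L_p(\bT^2_\theta)\|\le c\|M_\phi:L_p(\bT^2)\|$, which contradicts Theorem \ref{ex1}. None of this — the choice of $k_n$, the restriction to subalgebras, the ultraproduct identifications $k_n\gamma\to 0$, $k_n\theta\to\theta$, the finite-dimensional $i_n$ estimates, and the reduction to Theorem \ref{ex1} rather than to Pisier's example directly — appears in the proposal, so the proof is not established.
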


\begin{proof}
This is once again a transference from the multiplier constructed in Theorem \ref{ex1}. 

As above if the result does not hold, there exists $c$ with 
\begin{equation}\label{estm}
\|M_\psi: L_p(\bT^2_\theta)\to L_p(\bT^2_\theta)\|\leq c \, \|M_\psi: L_p(\bT^2_\gamma)\to L_p(\bT^2_\gamma)\|.
\end{equation}
 By the assumption, for all $n>0$, we can find some $k_n$ such that 
$$|e^{i2\pi k_n \gamma}-1| <\frac 1 n,\qquad |e^{i2\pi k_n \theta}-e^{i2\pi \theta}| <\frac 1 n.$$
Indeed the set $\bN(\theta,\gamma)$ is equidistributed in $\bR^2$ modulo 1.

The subalgebra $\{U_\gamma,V_\gamma^{k_n}\}''$ is $L_\infty(\bT^2_{k_n
  \gamma})$, the associated conditional expectation corresponds to the
multiplier associated to the function $\psi(k,l)=1_{k_n|l}$. And
similarly with $\theta$. Thus applying \eqref{estm} to multipliers
of the form $\phi(k,l)=1_{k_n|l}f(k,l/k_n)$ for $f:\bZ^2\to \bC$, we deduce that 
\begin{equation}\label{estm2}\|M_f: L_p(\bT^2_{k_n\theta})\to L_p(\bT^2_{k_n \theta})\|\leq c \, \|M_f: L_p(\bT^2_{k_n \gamma})\to L_p(\bT^2_{k_n \gamma})\|.\end{equation}
 
Let $\P_d^\alpha=\spn\{ U_\alpha^kV_\alpha^l ; |k|,\, |l|\leq d\}$. Going to ultraproducts as in Proposition \ref{emb} we deduce
that there are cb isometric embeddings
\begin{equation}\label{emb2}
\pi_\gamma : L_p(\bT^2) \to \prod_{\mathfrak U} L_p(\bT^2_{k_n\gamma}),\quad 
\pi_\theta : L_p(\bT^2_\theta) \to \prod_{\mathfrak U}L_p(\bT^2_{k_n\theta})\end{equation}
given by the extension of 
$\pi_\gamma(U_0)=(U_{k_n\gamma})^\bullet$, $\pi_\gamma(V_0)=(V_{k_n\gamma})^\bullet$ and 
 $\pi_\theta(U_\theta)=(U_{k_n \theta})^\bullet$, $\pi_\theta(V_\theta)=(V_{k_n\theta})^\bullet$.

In particular this restricts to $(\P_d^0,\|.\|_p)=\prod_{\mathfrak U} (\P_d^{k_n\gamma},\|.\|_p)$ given by the ultraproduct of the formal 
identity maps $i_n: U_0^kV_0^l\mapsto U_{k_n\gamma}^kV_{k_n\gamma}^l$
(recall that the ultraproduct of $m$-dimensional spaces is still $m$-dimensional).
 As the spaces share the same finite dimension, by a compactness argument, 
we get that 
$\lim_{\mathfrak U} \|i_n : (\P_d^0,\|.\|_p)\to (\P_d^{k_n\gamma},\|.\|_p)\|=
\lim_{\mathfrak U} \|i_n^{-1} : (\P_d^{k_n\gamma},\|.\|_p)\to (\P_d^{0},\|.\|_p)\|=1$.

Let $\phi:\bZ^2\to \bC$ and $F^2_d=F_d\tens F_d$ be the symbol of the
2-dimensional Fej\'er kernel. As we can write $M_{F_d^2\phi}=i_n\circ
M_\phi\circ i_n^{-1}\circ M_{F^2_d}$, we get $\lim_{\mathfrak U} \|
M_{F_d^2\phi}: L_p(\bT^2_{k_n\gamma}) \to L_p(\bT^2_{k_n\gamma})\|\leq
\|M_\phi :L_p(\bT^2)\to L_p(\bT^2)\|$.

Using the isometric embedding $\pi_\theta$ from \eqref{emb2}, the
estimate \eqref{estm2}, we can consider the ultraproduct of
$M_{F_d^2\phi}$ to get $\|M_{F_d^2\phi}: L_p(\bT^2_\theta) \to
L_p(\bT^2_\theta)\|\leq c \|M_\phi :L_p(\bT^2)\to
L_p(\bT^2)\|$. Letting $d\to\infty$ contradicts Theorem \ref{ex1}.

\end{proof}

\section{Bounded multipliers on $L_\infty$}

 We conclude by giving an analogue of Theorem \ref{ex1} for $p=\infty$
 (or $p=1$ by duality), recall that $\|M_\phi : L_\infty(\bT^2_\theta)\to  L_\infty(\bT^2_\theta)\|_{cb}=\|M_\phi : L_\infty(\bT^2)\to  L_\infty(\bT^2)\|$ by Proposition 
\ref{cbnor}.
\begin{prop}\label{ex3}
For any $\theta\notin \bQ$, there exists $\phi:\bZ^2 \to \bC$ such that 
$$ \|M_\phi : L_\infty(\bT_\theta^2)\to  L_\infty(\bT_\theta^2)\| <\infty \qquad \textrm{ but }
\qquad  \|M_\phi : L_\infty(\bT^2_\theta)\to  L_\infty(\bT^2_\theta)\|_{cb} =\infty.$$
\end{prop}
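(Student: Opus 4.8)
The plan is to reduce the statement to the known fact (from \cite{P2}, Proposition 8.1.3) that on $L_\infty(\bT)$ there is a bounded multiplier which is not completely bounded, and then transfer this failure of complete boundedness into $\bT_\theta^2$ while keeping the bounded norm finite. The mechanism should parallel the proof of Theorem \ref{ex1}, but now tracking the \emph{cb} norm on one side and the \emph{bounded} norm on the other. Concretely, if the conclusion failed, the closed graph theorem would give a constant $c$ with
$$\|M_\psi : L_\infty(\bT_\theta^2)\to L_\infty(\bT_\theta^2)\|_{cb}\le c\,\|M_\psi : L_\infty(\bT_\theta^2)\to L_\infty(\bT_\theta^2)\|$$
for every symbol $\psi$ for which the right-hand side is finite.

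First I would pick, via Pisier's example, a bounded Fourier multiplier $\phi$ on $L_\infty(\bT)$ with $\|M_\phi\|_{cb}=\infty$, and then use Proposition \ref{permult} to produce periodic multipliers $\phi_n:\bZ\to\bC$ with $\|M_{\phi_n}\|\le\|M_\phi\|<\infty$ but $\|M_{\phi_n}\|_{cb}\to\infty$. Next, exactly as in the proof of Proposition \ref{multtens}, I would embed $L_\infty(\bT)\,\bar\otimes\,L_\infty(\bT_\theta^2)$ cb-isometrically into the ultrapower $L_\infty(\bT^2_\theta)_{\mathfrak U}$ via the $*$-representation $\pi$ of Proposition \ref{emb} (with $\gamma=\theta$), and use the intertwining identity $\pi\circ(M_{\phi_n}\otimes\id)=(M_{\phi_n\otimes 1})_{\mathfrak U}\circ\pi$ together with Lemma \ref{ult}. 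This gives, on the \emph{bounded} side,
$$\|M_{\phi_n\otimes 1} : L_\infty(\bT_\theta^2)\to L_\infty(\bT_\theta^2)\|\ge\|M_{\phi_n}\otimes\id\|_{B(L_\infty(\bT)\bar\otimes L_\infty(\bT_\theta^2))}=\|M_{\phi_n}\|_{cb},$$
the last equality because $L_\infty(\bT_\theta^2)$ is the hyperfinite $\mathrm{II}_1$ factor, so amplification by it computes the cb norm. Hence $\|M_{\phi_n\otimes 1} : L_\infty(\bT_\theta^2)\to L_\infty(\bT_\theta^2)\|\to\infty$.

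On the other side I need the cb norm of $M_{\phi_n\otimes 1}$ to stay controlled; but by Proposition \ref{cbnor} the cb norm on $L_\infty(\bT_\theta^2)$ equals the cb norm on the classical $L_\infty(\bT^2)$, and by Fubini (tensoring with $\id$ on $L_\infty(\bT)$) one has $\|M_{\phi_n\otimes 1}\|_{cb(L_\infty(\bT^2))}=\|M_{\phi_n}\|_{cb(L_\infty(\bT))}$. This seems to go the wrong way, so instead I would run the comparison at the level of the \emph{bounded} classical norm: by Fubini, $\|M_{\phi_n\otimes 1} : L_\infty(\bT^2)\to L_\infty(\bT^2)\|=\|M_{\phi_n} : L_\infty(\bT)\to L_\infty(\bT)\|\le\|M_\phi\|$, which stays bounded, and this is exactly the bounded norm we want to keep finite on $\bT_\theta^2$. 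The point is that $\|M_{\phi_n\otimes 1} : L_\infty(\bT_\theta^2)\to L_\infty(\bT_\theta^2)\|$ — the \emph{bounded} norm on the quantum torus — need not equal its classical counterpart, and in fact I claim it blows up. So the dichotomy is: either $\|M_{\phi_n\otimes 1} : L_\infty(\bT_\theta^2)\to L_\infty(\bT_\theta^2)\|$ is already unbounded, in which case $\phi_n\otimes 1$ for large $n$ gives bounded classical/unbounded quantum and a fortiori (via the hypothetical constant $c$) a contradiction after passing to cb norms; or it stays bounded, and then the displayed inequality above forces $\|M_{\phi_n}\|_{cb}\le c\cdot(\text{bounded})\le c\|M_\phi\|<\infty$, contradicting $\|M_{\phi_n}\|_{cb}\to\infty$.

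The step I expect to be the main obstacle is making the cross-comparison honest: I must ensure the symbol $\phi_n\otimes 1$ is fed into the hypothetical inequality in a way that links its \emph{cb} norm on $\bT_\theta^2$ (which controls $\|M_{\phi_n}\|_{cb}$ via the hyperfinite amplification) to its \emph{bounded} norm on $\bT_\theta^2$, and simultaneously that the latter is dominated by the classical bounded norm $\|M_{\phi_n}\|_{B(L_\infty(\bT))}$. The cleanest route is probably: apply the hypothetical constant $c$ to $\psi=\phi_n\otimes 1$, bound its bounded norm on $\bT_\theta^2$ above by $\|M_{\phi_n\otimes 1}\|_{cb(L_\infty(\bT^2))}=\|M_{\phi_n}\|_{cb(L_\infty(\bT))}$ — no, that is again circular — so more carefully, bound the bounded norm on $\bT_\theta^2$ by the classical \emph{bounded} norm using a transference going the other direction (the $*$-homomorphism $\A_\theta^2\to\A_{-\theta}^2\,\bar\otimes\,\A_\theta^2$ of Proposition \ref{cbnor}'s proof does not give this for bounded norms, so one genuinely needs the ultraproduct embedding to push the estimate down, not up). Once that single transference inequality $\|M_{\phi_n\otimes 1} : L_\infty(\bT_\theta^2)\to L_\infty(\bT_\theta^2)\|\le\|M_{\phi_n} : L_\infty(\bT)\to L_\infty(\bT)\|$ is in place — and I would prove it by composing with $F^d_n$-truncations and using that $L_\infty(\bT_\theta^2)=L_\infty(\bT)\rtimes_\theta\bZ$ so that $M_{\phi_n\otimes 1}=M_{\phi_n}\rtimes\id$ with $M_{\phi_n}$ acting on the base and commuting with the action — the contradiction with Theorem \ref{ex1}'s mechanism and with $\|M_{\phi_n}\|_{cb}\to\infty$ closes the argument.
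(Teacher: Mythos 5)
Your proposal doesn't work, and the obstruction you sense in the last two paragraphs is real and fatal, not a technicality to be patched. The symbol $\phi_n\otimes 1$ is the wrong thing to feed into the hypothetical inequality $\|M_\psi\|_{cb}\le c\|M_\psi\|$ on $L_\infty(\bT_\theta^2)$. Proposition \ref{multtens}, which the paper states for all $1\le p\le\infty$, already gives the \emph{equality}
$$\|M_{\phi_n\otimes 1} : L_\infty(\bT_\theta^2)\to L_\infty(\bT_\theta^2)\| = \|M_{\phi_n} : L_\infty(\bT)\to L_\infty(\bT)\|_{cb},$$
and the right-hand side tends to $\infty$ by your own choice of $\phi_n$. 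So the \emph{bounded} norm of $M_{\phi_n\otimes 1}$ on the quantum torus blows up; it is not an eligible input for the inequality $cb\le c\cdot\mathrm{bdd}$ (both sides tend to $\infty$, giving no information). The "transference inequality" you propose to prove, $\|M_{\phi_n\otimes 1}\|_{B(L_\infty(\bT_\theta^2))}\le\|M_{\phi_n}\|_{B(L_\infty(\bT))}$, would contradict Proposition \ref{multtens} whenever $\phi_n$ is not completely bounded, so it is false and no amount of $F_d$-truncation or crossed-product reasoning will make it true. Your closing dichotomy also does not close: in the case where $\|M_{\phi_n\otimes 1}\|_{B(\bT_\theta^2)}$ is unbounded (which is the case that actually happens), you derive no contradiction with the existence of $c$, because an unbounded multiplier is simply outside the scope of the closed-graph inequality.

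The paper's actual argument for $p=\infty$ abandons the $\phi\otimes 1$ transference entirely. It builds, using $\theta\notin\bQ$, sequences $k_n,l_n$ with $k_n$ odd such that $e^{2\pi i\theta k_j l_n}\approx -1$ and $e^{2\pi i\theta k_n l_j}\approx 1$ for $j<n$, and sets $e_n=U_\theta^{k_n}V_\theta^{l_n}$. The near-anticommutation $e_ne_j+e_je_n\approx 0$ makes $\{e_{n+1},\dots,e_{n+N}\}$ span a subspace of $L_\infty(\bT_\theta^2)$ on which the operator norm is $2$-equivalent to the $L_2$ norm, and the orthogonal projection onto it is a Fourier multiplier of bounded norm $\le 2$; moreover every $\pm 1$ sign pattern gives a bounded multiplier of norm $\le 4$. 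Feeding these into the hypothetical inequality forces the sequence to be completely unconditional and completely complemented, and then an operator-space computation (Khintchine, duality, injectivity constant of $R_N\cap C_N$ being of order $\sqrt N$) yields the contradiction. This is a structural argument about a Sidon-type subset of the quantum torus, not a transference from a classical multiplier; no version of the $\phi\otimes 1$ trick can replace it at $p=\infty$.
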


\begin{proof}Assume the conclusion does not hold, then there is a constant $c$, with 
$\|M_\phi\|_{cb}\leq c\|M_\phi\|$.

One can construct inductively increasing sequences of integers $k_n,\,
l_n$ such that $k_n$ is odd and
$$\forall j<n,\qquad |e^{i2\pi \theta k_jl_n}+1|< \frac 1{2^n},\quad |e^{i2\pi \theta k_nl_j}-1|< \frac 1{2^n}.$$
Indeed to construct $l_{n+1}$, one just need to pick it  such that
$|e^{i\pi \theta l_{n+1}}+1|<\frac 1 {2^n k_n}$ as $\theta\notin \bQ$. Then
 $ |e^{i2\pi \theta k_jl_n}+1|< \frac 1{2^n}$ follows  by taking $k_j$ powers.

Similarly to get $k_{n+1}$, it suffices to choose $k$  big enough and  such that 
$|e^{i2\pi (2\theta k+\theta)}-1|<\frac 1 {2^n l_{n}}$ as $2\theta\notin \bQ$ and to put $k_{n+1}=2k+1$.

 Put $e_n=U_\theta^{k_n}V_\theta^{l_n}$. We have
$e_ne_j=e^{i2\pi \theta k_jl_n} U_\theta^{k_n+k_j}V_\theta^{l_n+l_j}$, hence
$$\| e_ne_j+ e_je_n \| \leq \frac 1{2^{n-1}},\qquad
\| e_ne_j^*+ e_j^*e_n \| \leq \frac 1{2^{n-1}}.$$
 Fix some $N$. For any $a_j\in \bC$, and $x=\sum_{j=1}^N a_j e_{n+j}$, we have the following inequalities
$$\| x \|^2\geq  \| \tau (x^*x)\|=\sum_{j=1}^N |a_j|^2$$
$$\| x \|^2\leq \|x^*x+xx^*\|\leq 2 \| \sum_{j=1}^N |a_j|^2 1\| + \frac
 {N(N-1)}{2^{n-1}} \max_j |a_j|^2.$$ We deduce that for $n$ big enough on the 
 subspace $E_N^n={\rm span\,}(e_{n+j})_{j=1}^N$ the $L_\infty$-norm 
is smaller than 2 times the $L_2$-norm.
Since the $L_\infty$-norm dominates the $L_2$-norm, $E_N^n$ is then
 2-complemented in $L_\infty(\bT^2_\theta)$ by the natural orthogonal projection; this is the Fourier multiplier
 $M_\psi$ where $\psi$ is the characteristic function of
 $\{(k_{n+j},l_{n+j})\}$.  More generally for any choice of sign
 $\varepsilon=(\varepsilon_i)$, the function
 $\psi_\varepsilon=\sum_{i=1}^N \varepsilon_i
 1_{\{(k_{n+j},l_{n+j})\}}$ defines a Fourier multiplier on $L_\infty$ of norm at most 4. Hence we can think of 
$\{(k_{n+j},l_{n+j})_{j\geq 0}\}$ as a kind of generalized hilbertian-Sidon set.

 We will use an argument from \cite{LRR} section 7 to get a
 contradiction.  We must have $\|M_{\psi_\varepsilon}\|_{cb}\leq 4c$,
 hence the sequence $(e_{n+j})_{j=1}^N$ is $4c$-completely
 unconditional in $L_\infty(\bT^2_\theta)$ and $4c$-completely
 complemented. By duality, the same holds in $L_1(\bT^2_\theta)$.

Let $f_i=e_i\tens \delta_i$ where $\delta_i$ is the basis of the operator space $R_N\cap C_N$.
Then, using that all $e_i$ are unitaries, we get that for matrices $a_i\in M_d$
$$\| \sum_i a_i\tens f_i\|_{M_d\tens L_\infty(\bT^2_\theta)\tens
  (R_N\cap C_N)}=\| \sum_i a_i\tens \delta_i \|_{M_d\tens (R_N\cap
  C_N)}=\max\{ \| \sum_i a_i^*a_i\|^{1/2},\| \sum_i
a_ia_i^*\|^{1/2}\}.$$ Moreover by complete unconditionality, $F_N={\rm
  span\,} f_i\subset L_\infty(\bT^2_\theta)\tens (R_N\cap C_N)$ is
completely complemented in $L_\infty(\bT^2_\theta)\tens (R_N\cap C_N)$
with constant $4c$.  Indeed the projection is just the average over
all possible signs $\E_{\varepsilon} (M_{\psi_\varepsilon}\tens
T_\varepsilon)$ where $T_\varepsilon(\delta_i)=\varepsilon_i
\delta_i$. By duality, we deduce that $F_N^*\approx {\rm span\,}
f_i\subset L_1(\bT^2_\theta)\hat \tens (R_N+C_n)$.  Next by the
noncommutative Khintchine inequalities (\cite{P2} Section 8.4) and
complete unconditionnality, for any matrices $a_i\in S^d_1$
$$\| \sum_i a_i\tens f_i\|_{S_1^d\hat \tens L_1(\bT^2_\theta)\hat
  \tens (R_N+ C_N)}\approx\E_{\varepsilon} \| \sum_i a_i\tens
\varepsilon_i e_i \|_{S_1^d\hat \tens L_1(\bT^2_\theta)}\approx \|
\sum_i a_i\tens e_i \|_{S_1^d\hat \tens L_1(\bT^2_\theta)}.$$ Hence
$(e_i)$ generates $R_N+C_N$ as an operator space in $L_1$. Using
duality once again, as $M_\psi$ is a projection, we get that for some
constant $C$ independent of $N$, $E_N^n$ is $C$-completely isomorphic
to $R_N\cap C_N$. This contradicts the fact that $R_N\cap C_N$ has an
injectivity constant of order
$\sqrt N$ (see \cite{Pbook}).
\end{proof}

The above argument actually shows that 
a generalized completely hilbertian-Sidon set must span $R\cap C$ as an operator space.

There is a possible variant of the proof. The set $\{e_n,n\geq 1\}$
actually generates a copy of $\ell_2$ in $L_\infty(\bT_\theta^2)$ and
is complemented. By choosing in the construction very rapidly
increasing sequences $(k_{n},l_{n})_{n\geq 1}$, one could also make
$\{(k_{n+j},l_{n+j})_{j\geq 0}\}$ a Sidon set in
$L_\infty(\bT^2)$. But infinite Sidon sets are not complemented, this also
leads to a contradiction.

Using the embedding and techniques of Corollary \ref{ex2}
with the injectivity of $L_\infty(\bT^2)$, one can also prove
that for any $\theta,\gamma\in \bR\backslash \bQ$ such that $\theta\notin \gamma \bQ$, there
is a multiplier $\phi$ such that 
$$ \|M_\phi : L_\infty(\bT_\theta^2)\to  L_\infty(\bT_\theta^2)\| <\infty \qquad \textrm{ but }
\qquad  \|M_\phi : L_\infty(\bT^2_\gamma)\to  L_\infty(\bT^2_\gamma)\| =\infty.$$

\medskip

\noindent \textbf{Acknowledgments.}  The author is supported by
ANR-2011-BS01-008-01.

\bibliographystyle{plain}

\end{document}